\newcommand{\comment}[1]{}
\def\fddto{\xrightarrow{\textit{f.d.d.}}}
\newcommand{\ind}{{\bf 1}}
\def\inddd#1{{\ind}_{\left\{#1\right\}}}
\def\indn#1{\{#1_n\}_{n\in\N}}
\newcommand{\proba}{\mathbb P}
\newcommand{\esp}{{\mathbb E}}
\newcommand{\inv}{^{-1}}
\newcommand{\cov}{{\rm{Cov}}}
\newcommand{\eqnh}{\begin{eqnarray*}}
\newcommand{\eqne}{\end{eqnarray*}}
\newcommand{\eqnhn}{\begin{eqnarray}}
\newcommand{\eqnen}{\end{eqnarray}}
\newcommand{\equh}{\begin{equation}}
\newcommand{\eque}{\end{equation}}
\def\summ#1#2#3{\sum_{#1 = #2}^{#3}}
\def\sif#1#2{\sum_{#1=#2}^\infty}
\newcommand{\eqd}{\stackrel{d}{=}}
\def\topp#1{^{(#1)}}
\def\abs#1{\left|#1\right|}
\def\ccbb#1{\left\{#1\right\}}
\def\pp#1{\left(#1\right)}
\def\bb#1{\left[#1\right]}
\def\mmid{\;\middle\vert\;}
\def\floor#1{\left\lfloor #1 \right\rfloor}
\def\aa#1{\left\langle #1\right\rangle}
\def\saa#1{\langle#1\rangle}
\def\vv#1{{\boldsymbol #1}}
\def\vvt{{\boldsymbol t}}
\def\qmand{\quad\mbox{ and }\quad}
\def\qmwith{\quad\mbox{ with }\quad}
\def\mfa{\mbox{ for all }}
\def\wt#1{\widetilde{#1}}
\def\wb#1{\overline{#1}}
\def\what#1{\widehat{#1}}
\def\limn{\lim_{n\to\infty}}
\def\limsupn{\limsup_{n\to\infty}}
\def\liminfn{\liminf_{n\to\infty}}
\def\weakto{\Rightarrow}
\def\R{{\mathbb R}}
\def\N{{\mathbb N}}
\def\B{{\mathbb B}}
\def\calC{\mathcal C}
\def\calE{\mathcal E}
\def\calI{\mathcal I}
\def\calL{\mathcal L}
\def\calM{\mathcal M}
\def\calX{\mathcal X}
\def\topp#1{^{\scriptscriptstyle (#1)}}
\def\mab{\calM_{\alpha,\beta}}
\def\ddelta#1{\delta_{\pp{#1}}}
\newtheorem{Thm}{Theorem}[section]
\newtheorem{Lem}[Thm]{Lemma}
\newtheorem{Prop}[Thm]{Proposition}
\newtheorem{Coro}[Thm]{Corollary}
\theoremstyle{definition}
\newtheorem{Rem}[Thm]{Remark}
\numberwithin{equation}{section}
\title[An aggregated model]{An aggregated model for Karlin stable processes}
\author{Yi Shen}
\address{
Yi Shen,
Department of Statistics and Actuarial Science,
University of Waterloo,
Mathematics 3 Building,
200 University Avenue West
Waterloo, Ontario N2L 3G1,
Canada.
}
\email{yi.shen@uwaterloo.ca}
\author{Yizao Wang}
\address
{
Yizao Wang,
Department of Mathematical Sciences,
University of Cincinnati,
2815 Commons Way, ML--0025,
Cincinnati, OH, 45221-0025, USA.
}
\email{yizao.wang@uc.edu}
\author{Na Zhang}
\address
{
Na Zhang,
Department of Mathematics,
Towson University,
8000 York Road
Towson, MD 21252}
\email{nzhang@towson.edu}
\date{\today}
\begin{document}\sloppy
\begin{abstract}
An aggregated model is proposed, of which the partial-sum process scales to the Karlin stable processes recently investigated in the literature. The limit extremes of the proposed model, when having regularly-varying tails, are characterized by the convergence of the corresponding point processes. The proposed model is an extension of an aggregated model proposed by \citet{enriquez04simple} in order to approximate fractional Brownian motions with Hurst index $H\in(0,1/2)$, and is of a different nature of the other recently investigated Karlin models which are essentially based on infinite urn schemes.
\end{abstract}
\keywords{Regular variation, stable process, point process, limit theorem, aggregated model}
\subjclass[2010]{
Primary, 60F05; 
 Secondary, 
60G52, 
60G70. 
   }

\maketitle
\section{Introduction and main results}

\subsection{Karlin stable processes}
The Karlin stable processes are a family of self-similar symmetric $\alpha$-stable (S$\alpha$S) stochastic processes, $\alpha\in(0,2]$, with stationary increments that recently appeared in the literature \citep{durieu16infinite,durieu20infinite}. A Karlin S$\alpha$S process has a memory parameter $\beta\in(0,1)$. In the case $\alpha=2$, the process becomes a fractional Brownian motion with Hurst index $H = \beta/2\in(0,1/2)$. The Karlin stable processes exhibit long-range dependence \citep{samorodnitsky16stochastic,pipiras17long,beran13long}, and they first appeared as scaling limits of the so-called Karlin model, which is as an infinite urn scheme, of which the law on the urns has a power-law decay \citep{karlin67central,gnedin07notes}, with certain randomization.
The Karlin model and its recent variations  have attracted attentions in the literature of stochastic processes as they serve as simple models that exhibit long-range dependence. Notable variations and extensions include one to set-indexed models  \citep{fu21simulations} that include and extend the set-indexed fractional Brownian motions \citep{herbin06set},   and another recent one  to hierarchical models \citep{iksanov21functional}.

  We first recall the Karlin stable processes  $\{\zeta_{\alpha,\beta}(t)\}_{t\ge0}$, 
  and explain
   how it arises from the Karlin model 
   with randomization as in \citep{durieu16infinite,durieu20infinite}.
 Throughout, we assume
 $\alpha\in (0,2]$ and $\beta\in (0,1)$. Then, $\zeta_{\alpha,\beta}$ is a symmetric $\alpha$-stable (S$\alpha$S) process, of which the characteristic function of  finite-dimensional distributions is, for any $d\in\N\equiv \ccbb{1, 2, \cdots}$, $t_1,\cdots, t_d\ge 0$, $\theta_1, \cdots, \theta_d\in\R$,
\equh\label{eq:zeta fdd}
\esp\exp\pp{i\sum_{j=1}^{d}\theta_j  \zeta_{\alpha,\beta}(t_j)}
=\exp\pp{-\frac{\beta}{\Gamma(1-\beta)\mathsf C_\alpha}\int_{0}^{\infty}\esp \abs{\sum_{j=1}^{d}\theta_j \inddd{N(t_jq) \text { odd}}}^{\alpha}q^{-\beta-1}dq
},
\eque
where on the right-hand side, $N$ is a standard Poisson process (the probability spaces involved on both sides are not necessarily the same), and
 \[
 \mathsf C_\alpha = \begin{cases}
  \pp{\int_0^\infty x^{-\alpha}\sin xdx}\inv, & \mbox{ if } \alpha\in(0,2),\\
  2 & \mbox{ if } \alpha = 2.
  \end{cases}
 \]
It follows from the representation above that the process is self-similar with index $\beta/\alpha$ and with stationary increments \citep{durieu20infinite}. Moreover, when $\alpha=2$ it is a fractional Brownian motion with Hurst index $H = \beta/2$ up to a multiplicative constant (see \eqref{eq:fBm} below).

When $\alpha\in(0,2)$, \eqref{eq:zeta fdd} has a corresponding series representation as a well-known fact on stable processes \citep{samorodnitsky94stable}. However, for our discussions later we shall need to work with another series representation of the process $\zeta_{\alpha,\beta}$ restricted to $t\in[0,1]$ as follows. In this case, introduce first
 \equh\label{eq:xi}
 \xi:=\sif\ell1\summ j1{Q_{\beta,\ell}}\ddelta{\varepsilon_\ell\Gamma_\ell^{-1/\alpha},U_{\ell,j}},
 \eque
 where $\{\Gamma_\ell\}_{\ell\in\N}$ is the collection of consecutive arrival times of a standard Poisson process on $\R_+$, $\{\varepsilon_\ell\}_{\ell\in\N}$ are i.i.d.~Radamacher random variables, $\{Q_{\beta,\ell}\}_{\ell\in\N}$ are i.i.d.~copies of a $\beta$-Sibuya random variable (see \eqref{eq:Sibuya} below), and $\{U_{\ell,j}\}_{\ell,j\in\N}$ are i.i.d.~uniform random variables on $(0,1)$. All four families of random variables are assumed to be independent. Then, we also have the following series representation of the Karlin stable processes, {\em restricted to $t\in[0,1]$},
 \equh\label{eq:zeta series}
 \ccbb{\zeta_{\alpha,\beta}(t)}_{t\in[0,1]} \eqd \ccbb{\sif\ell1 \frac{\varepsilon_\ell}{\Gamma_\ell^{1/\alpha}}\inddd{\summ j1{Q_{\beta,\ell}}\inddd{U_{\ell,j}\le t} \ \rm odd}}_{t\in[0,1]}, \alpha\in(0,2),\beta\in(0,1),
 \eque
The fact that the representations  \eqref{eq:zeta fdd} and \eqref{eq:zeta series}  are equivalent is
 recalled in Lemma \ref{lem:1} (following a more general result in \citep[Theorem 2.1]{fu21simulations}).

Now we explain the so-called {\em randomized Karlin model} in \citep{durieu16infinite,durieu20infinite}, for comparison purpose only (see Remark \ref{rem:comparison}).  Let $\indn Y$ be i.i.d.~$\N$-valued random variables with $\proba(Y_1 = k) \sim k^{-1/\beta}$ as $k\to\infty$ for some $\beta\in(0,1)$ (we only present a simple version; a slowly varying function is allowed in general). Let $\indn X$ be i.i.d.~random variables independent from $\indn Y$, and assume in addition that $X_1$ is symmetric with $1-\esp \exp(i\theta X_1)\sim \sigma_X^\alpha |\theta|^\alpha$ as $\theta\to 0$. Consider
 the partial-sum process
\equh\label{eq:Sn0}
S_n := \summ j1n (-1)^{K_{j
,Y_j}}
X_{Y_j} =  \sif\ell1 X_\ell\inddd{K_{n,\ell}\ \rm odd} \qmwith K_{n,\ell} := \summ j1n \inddd{Y_j = \ell}, \quad n,\ell\in\N.
\eque
Then, one can show that for some explicit constant $C_{\alpha,\beta}$,
\[
\frac1{n^{\beta/\alpha}}\ccbb{S_{\floor{nt}}}_{t\in[0,1]}\fddto C_{\alpha,\beta}\ccbb{\zeta_{\alpha,\beta}(t)}_{t\in[0,1]}.
\]
Note that when $\indn X$ are i.i.d.~Radamacher random variables, in view of the first expression 
in \eqref{eq:Sn0}
above $\{S_n\}_{n\in\N}$ can be interpreted as a correlated random walk with $\pm1$ steps that scales to a fractional Brownian motion with Hurst index $H = \beta/2$ \citep{durieu16infinite}.

\subsection{An aggregated model}
We propose a one-dimensional aggregated model as follows. The model actually extends a previous one by \citet{enriquez04simple} (see Remark \ref{rem:Enriquez}).  Let $q$ be a random 
 parameter taking values from $(0,1)$, and
 given $q$, let $\{\eta\topp q_j\}_{j\in\N}$ be a sequence of conditionally i.i.d.~Bernoulli random variables with parameter $q$.
 Let $\calX$ be a symmetric random variable, independent from $q$ and $\{\eta\topp q_j\}_{j\in\N}$.
Let $\alpha'>0$ be another parameter.  Then we introduce
\begin{equation}\label{eq:X_j}
X_j:=\frac{\calX}{q^{1/\alpha'}}\cdot (-1)^{\tau_j\topp q}\eta_j\topp q \qmwith \tau_j\topp q:= \summ k1j \eta\topp q_k, \quad j\in\N.
\end{equation}
In words, $X_j = 0$ whenever $\eta_j\topp q = 0$, and for those $j\in\N$ such that $\eta_j\topp q=1$, $X_j$ takes the same value $\calX/q^{1/\alpha'}$, but with alternating signs. One can check that $\{X_n\}_{n\in\N}$ forms a stationary sequence of random variables. The partial-sum process is then
\equh\label{eq:S_n}
S_n:=\summ j1n X_j = \frac \calX{q^{1/\alpha'}}\inddd{\tau_n\topp q \ \rm odd}, n\in\N.
\eque
Note that there is no summation involved in the second expression above, and $S_n \ne 0$ implies  necessarily that $\tau\topp q_n$ is odd.  The simple expression is essentially due to the alternating signs.
Next, introduce
\equh\label{eq:notations}
\pp{(\calX_i, q_i, \{\eta_{i,j}\topp{q_i}\}_{j\in\N},\{\tau_{i,j}\topp{q_i}\}_{j\in\N})}_{i\in\N}\stackrel{i.i.d.}\sim \pp{\calX,q, \{\eta\topp q_j\}_{j\in\N},\{\tau\topp{q}_j\}_{j\in\N}},
\eque
and for each copy let $\{S_n\topp i\}_{n\in\N}$ denote the corresponding partial-sum process.
We are interested in the  aggregated model, for an increasing sequence of positive integers $\{m_n\}_{n\in\N}$,
\[
\what \calX_{n,j} := \summ i1{m_n}\frac{\calX_i}{q_i^{1/\alpha'}}\cdot(-1)^{\tau_{i,j}\topp{q_i}}\eta_{i,j}\topp{q_i}, \quad n\in\N, j=1,\dots,n,
\]
and its corresponding partial-sum process
\[
\ccbb{\what S_n(t)}_{t\in[0,1]} :=\ccbb{\summ j1{\floor{nt}}\what \calX_{n,j}}_{t\in[0,1]} \equiv \ccbb{\sum_{i=1}^{m_n} S_{\floor{nt}}\topp i}_{t\in[0,1]} \equiv \ccbb{\summ i1{m_n}\frac{\calX_i}{q_i^{1/\alpha'}}\inddd{\tau_{i,{\floor{nt}}}\topp {q_i} \ \rm odd}}_{t\in[0,1]}.
\]
Above, we provide three equivalent representations to better understand the process. We shall mostly use the third one in our analysis.

Now we specify the assumptions on $q$ and $\calX$. The random parameter $q$ is assumed to have the probability density function
 \begin{equation}\label{eq:q}
p(x) =  x^{-\rho}L(1/x), \quad x\in(0,1), \mbox{ for some } \rho<1,
 \end{equation}
where $L$ is a slowly varying function at infinity.
The random variable $\calX$ is assumed to be symmetric, and either to have finite second moment, or
\begin{equation}\label{eq:varepsilon_RV}
\wb F_{|\calX|}(x) \equiv   \proba\pp{|\calX|>x}\sim C_\calX x^{-\alpha},
   x>0, 
    \mbox{ for some $\alpha>0$ and $C_\calX>0$}.
\end{equation}
\subsection{Main results}
Our first result is a 
multivariate
central limit theorem.
\begin{Thm}\label{thm:1}
Assume \eqref{eq:q} holds. Assume the symmetric random variable $\calX$ satisfies one of the following two conditions:
\begin{enumerate}[(i)]
\item $\esp \calX^2<\infty$, and in this case set $\alpha=2$, $C_\calX  :=\esp\calX^2$.
\item \eqref{eq:varepsilon_RV} holds with $\alpha\in(0,2)$.  \end{enumerate} Further 
assume
\equh\label{eq:beta}
\beta := \gamma-1+\rho\in(0,1) \qmwith \gamma := \frac\alpha{\alpha'}.
\eque
Then, with $m_n$ satisfying
\begin{equation}\label{eq:m_n}
    \limn \frac{m_nL(n)}{n^{1-\rho}}=\infty,
\end{equation}
and
\[
a_n = \pp{C_\calX \frac{\Gamma(1-\beta)}\beta \cdot n^\beta m_nL(n)}^{1/\alpha},
\]
we have
\[
\left\{\frac{\what S_{n}(t)}{a_n}\right\}_{t\in[0,1]}\fddto \{\zeta_{\alpha,\beta}(t)\}_{t\in[0,1]}.
\]
\end{Thm}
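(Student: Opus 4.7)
The plan is to prove finite-dimensional convergence by direct computation of the joint characteristic functions. Fix $d\in\N$, $t_1,\ldots,t_d\in[0,1]$, $\theta_1,\ldots,\theta_d\in\R$, and set
\[
W_n(q):=\sum_{j=1}^d \theta_j\inddd{\tau_{\floor{nt_j}}\topp{q}\text{ odd}}.
\]
By the i.i.d.\ structure \eqref{eq:notations} and the explicit form \eqref{eq:S_n}, the joint characteristic function factors as
\[
\esp\exp\pp{i\sum_{j=1}^d\theta_j\frac{\what S_n(t_j)}{a_n}}=\phi_n(\vv\theta)^{m_n},\qquad \phi_n(\vv\theta):=\esp\exp\pp{\frac{i\calX}{q^{1/\alpha'}a_n}W_n(q)}.
\]
Matching against \eqref{eq:zeta fdd}, it suffices to show $\phi_n(\vv\theta)\to 1$ together with $m_n[1-\phi_n(\vv\theta)]\to \frac{\beta}{\Gamma(1-\beta)\mathsf C_\alpha}\int_0^\infty \esp|\sum_{j=1}^d\theta_j\inddd{N(ut_j)\text{ odd}}|^\alpha u^{-\beta-1}\,du$.

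To compute $\phi_n$, I would condition first on $q$ and the Bernoulli sequence $\{\eta_k\topp{q}\}_k$, and use the symmetry of $\calX$ to rewrite $\phi_n(\vv\theta)=\esp h_\calX\pp{(q^{1/\alpha'}a_n)^{-1}W_n(q)}$ with $h_\calX(s)=\esp\cos(s\calX)$. Under the tail hypothesis \eqref{eq:varepsilon_RV}, a standard computation (integration by parts in the representation $\esp[1-\cos(s\calX)]=s\int_0^\infty \wb F_{|\calX|}(x)\sin(sx)\,dx$, followed by the substitution $y=sx$) gives
\[
1-h_\calX(s)\sim \frac{C_\calX}{\mathsf C_\alpha}|s|^\alpha\qmmas s\to 0,
\]
with the constant arising from $\int_0^\infty y^{-\alpha}\sin y\,dy=1/\mathsf C_\alpha$; the same expression holds in the finite-variance case by Taylor expansion, since $\mathsf C_2=2$ and $C_\calX=\esp\calX^2$. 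Substituting $u=nq$ in the expectation over $q$ (whose density is \eqref{eq:q}), the exponents of $n$ collapse via the identity $\gamma+\rho-1=\beta$ into a factor $n^\beta L(n)$, which exactly cancels the corresponding factor in $a_n^\alpha=C_\calX\Gamma(1-\beta)\beta^{-1}n^\beta m_n L(n)$. The outcome is
\[
m_n[1-\phi_n(\vv\theta)]\approx \frac{\beta}{\Gamma(1-\beta)\mathsf C_\alpha}\int_0^n\esp\abs{W_n(u/n)}^\alpha u^{-\beta-1}\frac{L(n/u)}{L(n)}\,du.
\]
For each fixed $u>0$, $\tau_{\floor{nt_j}}\topp{u/n}$ is Binomial with mean converging to $ut_j$, so the vector $(\tau_{\floor{nt_j}}\topp{u/n})_{j=1}^d$ converges jointly in distribution to the Poisson marginals $(N(ut_j))_{j=1}^d$. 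Bounded convergence upgrades this to $\esp|W_n(u/n)|^\alpha\to\esp|\sum_j\theta_j\inddd{N(ut_j)\text{ odd}}|^\alpha$, and $L(n/u)/L(n)\to 1$ identifies the pointwise limit of the integrand with exactly the one in \eqref{eq:zeta fdd}.

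The main obstacle is to justify the interchange of limit and integral and to control the error from the asymptotic of $h_\calX$. For the interchange, the crude bound $\esp\sabs{W_n(u/n)}^\alpha\le (\sum_j\sabs{\theta_j})^\alpha \min\spp{1,\,\max_j t_j\cdot u}$ (via the union bound on $\cup_j\{\tau_{\floor{nt_j}}\topp{u/n}\ge 1\}$) supplies an extra factor of $u$ near zero, making $u^{-\beta-1}\cdot u=u^{-\beta}$ integrable there since $\beta<1$, while $u^{-\beta-1}$ is integrable at infinity; Potter's bounds absorb $L(n/u)/L(n)$ into harmless factors $u^{\pm\delta}$ for arbitrarily small $\delta>0$. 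For the Taylor error $g(s):=1-h_\calX(s)-C_\calX\mathsf C_\alpha^{-1}|s|^\alpha$, I would split $|g(s)|\le\varepsilon|s|^\alpha$ on $\{|s|\le\eta\}$ (contributing $\varepsilon$ times the limit integral, hence vanishing as $\varepsilon\to 0$) and treat the complementary region separately: there $|s|>\eta$ forces $q\lesssim a_n^{-\alpha'}$, and a direct estimate using $\esp|W_n(q)|^\alpha\lesssim nq$ bounds its contribution (up to slowly varying corrections) by a constant multiple of $\bb{m_nL(n)/n^{1-\rho}}^{-(1-\beta)/\gamma}$, which vanishes precisely under the growth condition \eqref{eq:m_n}. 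Finally, $\phi_n(\vv\theta)\to 1$ because \eqref{eq:m_n} forces $m_n\to\infty$ (since $\rho<1$ and $L$ is subpolynomial), so $\phi_n(\vv\theta)^{m_n}\to\exp[-\lim m_n(1-\phi_n(\vv\theta))]$ and the proof is complete.
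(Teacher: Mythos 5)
Your proposal is correct and follows essentially the same route as the paper: compute the joint characteristic function, exploit the i.i.d.\ aggregation to reduce to $m_n(1-\phi_n)$, use the asymptotic $1-\phi_\calX(\theta)\sim (C_\calX/\mathsf C_\alpha)|\theta|^\alpha$ together with the substitution $u=nq$ and the binomial-to-Poisson limit for the parity indicators, and control the small-$q$ region via Karamata/Potter under \eqref{eq:m_n}. The only differences are organizational (you keep $W_n(q)$ as a random variable where the paper decomposes over parity patterns $\vv\delta\in\Lambda_d$, and you justify the limit interchange inline where the paper defers the truncation estimates to the proof of Theorem \ref{thm:PPP}), and your error bounds match the paper's own calculations.
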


Regarding scaling limits of extremes, our second result is a convergence of point processes.
\begin{Thm}\label{thm:PPP}
Assume \eqref{eq:q}, \eqref{eq:varepsilon_RV} with $\alpha>0$ and \eqref{eq:beta}. Assume that, in addition to \eqref{eq:m_n}, $m_n\le C n^\kappa$ for some $\kappa\in(0,2\beta/(\alpha-2))$ if $\alpha\ge 2$ (so $\alpha=2$ means
 that $m_n$ grows at a polynomial rate).  We have
\[
\xi_n:=\summ j1n\ddelta{\summ i1{m_n}\calX_i\eta\topp{q_i}_{i,j}/(a_nq_i^{1/\alpha'}),j/n} \weakto \xi,
\]
in $\mathfrak M_p((\wb \R\setminus\{0\})\times [0,1])$, where $\xi$ is as in \eqref{eq:xi}.
\end{Thm}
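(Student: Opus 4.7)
The plan is to prove $\xi_n\weakto\xi$ by matching Laplace functionals, after first approximating $\xi_n$ by a ``cluster-indexed'' auxiliary point process whose Laplace functional factorises cleanly.

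\textbf{Cluster decomposition.} Set $Z_i:=\calX_i/(a_n q_i^{1/\alpha'})$ and define
\[
\tilde\xi_n:=\summ i1{m_n}\summ j1n\eta\topp{q_i}_{i,j}\ddelta{Z_i,j/n}.
\]
This differs from $\xi_n$ only when several $i$'s simultaneously satisfy $\eta\topp{q_i}_{i,j}=1$: at such $j$, $\xi_n$ records one atom with first coordinate $\sum_i Z_i\eta\topp{q_i}_{i,j}$, whereas $\tilde\xi_n$ records one atom per contributing $i$. The first step is to show that on $(\wb\R\setminus\{0\})\times[0,1]$ these discrepancies are vaguely negligible. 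Macroscopic atoms ($|Z_i|>\epsilon$) come from $i$'s with $nq_i$ of constant order, so two such ``active'' clusters share an index $j$ with conditional probability $q_iq_{i'}\sim \lambda_i\lambda_{i'}/n^2$, yielding expected total overlap $O(1/n)$ across $j\le n$. The contribution of the ``bulk'' $\{i:|Z_i|\le\epsilon\}$ to $Y_{n,j}=\sum_i Z_i\eta\topp{q_i}_{i,j}$ must then be shown to be $o(1)$ uniformly in $j$; this is where the upper constraint $m_n\le Cn^\kappa$ enters when $\alpha\ge 2$, via a truncation and second-moment bound analogous to those in Theorem \ref{thm:1}, while for $\alpha<2$ the heavy tails of $Z$ make the macroscopic atoms dominant with no extra assumption.

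\textbf{Laplace functional reduction.} Fix a nonnegative continuous $f$ supported away from $\{0\}\times[0,1]$ and set $g(y):=\int_0^1(1-e^{-f(y,t)})\,dt$. Conditioning on $(\calX_i,q_i)_{i\le m_n}$ and using Bernoulli independence,
\[
\esp\bb{e^{-\tilde\xi_n(f)}\mmid(\calX_i,q_i)_{i\le m_n}}=\prodd i1{m_n}\prodd j1n\pp{1-q_i\pp{1-e^{-f(Z_i,j/n)}}}\sim\prodd i1{m_n}\exp\pp{-nq_i\,g(Z_i)},
\]
where the asymptotic uses $nq_i=O(1)$ in the regime that actually contributes. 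Hence the Laplace functional of $\tilde\xi_n$ asymptotically equals the Laplace functional of the auxiliary marked point process $\zeta_n:=\sum_{i=1}^{m_n}\ddelta{Z_i,nq_i}$ evaluated at $(y,\lambda)\mapsto\lambda g(y)$.

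\textbf{PPP limit of $\zeta_n$ and identification.} The array $\{(Z_i,nq_i)\}_{i\le m_n}$ is i.i.d.\ in $i$. A direct tail computation combining \eqref{eq:varepsilon_RV}, the density \eqref{eq:q}, the identity $\beta=\gamma-1+\rho$, and the explicit form of $a_n$ yields, after the substitution $\lambda=nq$,
\[
m_n\,\proba\pp{Z>y,\,nq\in(\lambda_1,\lambda_2)}\longrightarrow\frac{y^{-\alpha}}{2\Gamma(1-\beta)}\int_{\lambda_1}^{\lambda_2}\lambda^{-\beta-1}\,d\lambda,\qquad y>0,
\]
with the symmetric statement for $y<0$. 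Kallenberg's theorem then gives $\zeta_n\weakto\zeta^*$, a PPP on $(\wb\R\setminus\{0\})\times(0,\infty)$ of intensity $\frac{\alpha\beta}{2\Gamma(1-\beta)}|y|^{-\alpha-1}\lambda^{-\beta-1}\,dy\,d\lambda$. The PPP Laplace formula combined with $\int_0^\infty(1-e^{-\lambda g})\lambda^{-\beta-1}\,d\lambda=\Gamma(1-\beta)g^\beta/\beta$ then gives
\[
\limn\esp\bb{e^{-\tilde\xi_n(f)}}=\exp\pp{-\frac\alpha2\int_{\R\setminus\{0\}}g(y)^\beta|y|^{-\alpha-1}\,dy}.
\]
An independent computation for $\xi$, applying the $\beta$-Sibuya pgf $\esp s^{Q_\beta}=1-(1-s)^\beta$ within each cluster and then the PPP Laplace formula for $\{\varepsilon_\ell\Gamma_\ell^{-1/\alpha}\}$, yields the identical expression for $\esp e^{-\xi(f)}$, completing the proof.

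The main obstacle is the cluster-approximation step: overlaps are handled by a straightforward variance estimate, but controlling the bulk requires delicate truncation, and this is precisely where the upper constraint $m_n\le Cn^\kappa$ with $\kappa<2\beta/(\alpha-2)$ is used. The rest is a routine, if calculation-heavy, combination of Kallenberg's PPP criterion with the standard Sibuya/PPP identities used to characterize $\xi$ in the first place.
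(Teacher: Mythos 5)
Your skeleton coincides with the paper's own proof: introduce the cluster-split process $\wt\xi_n=\sum_{i}\sum_j\eta_{i,j}^{(q_i)}\ddelta{Z_i,j/n}$, establish its convergence via Laplace functionals, and then show that collapsing coincident atoms (passing from $\wt\xi_n$ back to $\xi_n$) is negligible; your identification of the limit Laplace functional $\exp(-\frac\alpha2\int g(y)^\beta|y|^{-\alpha-1}dy)$ via the Sibuya pgf is correct, and your marked-empirical-measure route $\zeta_n=\sum_i\ddelta{Z_i,nq_i}$ is a legitimate repackaging of the paper's conditional limit theorem (Proposition \ref{prop:1}). But the two steps carrying the real difficulty are asserted rather than proved, and one assertion is false as stated. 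You claim the contribution of the bulk $\{i:|Z_i|\le\epsilon\}$ to $\sum_iZ_i\eta_{i,j}^{(q_i)}$ is $o(1)$ uniformly in $j$. For fixed $\epsilon$ this is false for every $\alpha$: there are $O_P(1)$ clusters with $|Z_i|\in(\epsilon/2,\epsilon]$, each occupying a positive number of locations, so $\max_j$ of the bulk is of order $\epsilon$, not $o(1)$. What is actually needed — and what occupies Lemmas \ref{lem:Z}--\ref{lem:W} of the paper — is (a) a combinatorial bound showing that with high probability no location carries $r$ or more clusters with $|Z_i|\ge\epsilon_n:=n^{-\beta_0/\alpha}$, with $r=\floor{1/(\beta-\beta_0)}+1$, giving a uniform bound of order $r\epsilon$ that is only killed by letting $\epsilon\downarrow0$ after $n\to\infty$; (b) a Bernstein bound on the sub-$\epsilon_n$ part, which is where $m_n\le Cn^\kappa$ enters for $\alpha\ge2$ but which is needed for $\alpha<2$ as well; and (c) the fact that mid-range clusters avoid the macroscopic locations. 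Saying these are ``analogous to those in Theorem \ref{thm:1}'' is circular, since the paper's proof of Theorem \ref{thm:1} defers exactly these estimates to the proof of Theorem \ref{thm:PPP}.

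The second gap is in the PPP step. Your limit intensity $\propto|y|^{-\alpha-1}\lambda^{-\beta-1}$ is not Radon near $\lambda=0$, and indeed $m_n\proba(|Z|>y)\to\infty$ without a restriction $nq\ge\lambda_1$, because under \eqref{eq:m_n} the clusters with tiny $q$ produce huge $Z_i$ yet never activate. Consequently Kallenberg's criterion on rectangles bounded away from $\{\lambda=0\}$ does not by itself license evaluating the Laplace functional at the non-compactly-supported function $(y,\lambda)\mapsto\lambda g(y)$: you must additionally prove the uniform negligibility $\lim_{\lambda_1\downarrow0}\limsup_n m_n\,\esp\bb{(1-e^{-nq\,g(Z)});\,nq<\lambda_1}=0$ (and the analogous statement for $nq>\lambda_1^{-1}$). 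This is precisely the paper's estimate of $\Psi_{n,\delta,2}$, the most delicate computation in Proposition \ref{prop:1}, requiring a further split of $[0,\delta/n]$ at $D_{n,\delta,x}=(D_\delta/(a_nx))^{\alpha'}$ together with Potter's bounds; your proposal does not acknowledge that anything beyond a ``direct tail computation'' is required. (A minor slip: your displayed tail limit is missing a factor of $\beta$ relative to the intensity you then state; the intensity itself is correct.)
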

Above and below, $\mathfrak M_p(E)$ is the space of Radon point measures on the metric space $E$, equipped with vague topology. Our reference for point processes and convergence is \citep{resnick87extreme}.
In the case $\alpha<2$, Theorem \ref{thm:PPP} contains more information regarding the limit of the partial-sum process, 
and provides a second proof for Theorem \ref{thm:1},
 as discussed in Section \ref{sec:second proof}. Moreover, Theorem \ref{thm:PPP} also implies extremal limit theorems regarding the proposed model, as explained in 
Section \ref{sec:RSM}.
 In particular, the choice of $a_n$ is such that
\[
\proba\pp{\frac{|\calX|}{q^{1/\alpha'}}>a_n x, \tau_n\topp q\ne 0}\sim \frac{x^{-\alpha}}{m_n}, \mfa x>0.
\]
As the key of Theorem \ref{thm:PPP}, a more refined conditional limit theorem given the event above is in Proposition \ref{prop:1}.

We conclude the introduction with a few remarks.
\begin{Rem}\label{rem:tightness}
For the central limit theorem, we only prove the convergence of finite-dimensional distributions to the Karlin stable process for
 $\alpha\in(0,2]$, without the tightness. The tightness is a challenging issue and actually, in \citep{durieu20infinite}, the tightness for the randomized Karlin model was only proved for $\alpha\in(0,1)$, and the tightness remains an open question for $\alpha\in[1,2)$ (for the Gaussian case the tightness was proved in \citep{durieu16infinite}). It is also an open question to show that the Karlin stable process has a version in $D$ space for $\alpha\in[1,2)$.
\end{Rem}

\begin{Rem}\label{rem:Enriquez}
The main inspiration of this paper came from a paper of \citet{enriquez04simple}, and our model is in fact a generalization of a model proposed therein. The goal of \citep{enriquez04simple} was to provide an approximation of fractional Brownian motion with Hurst index $H\in(0,1)$ by aggregation of independent correlated random walks. Two models were proposed therein and the second was for $H\in(0,1/2)$, 
recalled here.
Consider again random variable $q$ with probability density function
\[
(1-2H)2^{1-2H}q^{-2H}\ind_{\{q\in(0,1/2)\}}.
\]
Then a sequence of random variables $\{\varepsilon_n\}_{n\in\N}$ is constructed as follows: $\varepsilon_1$ is a $\pm 1$-valued symmetric random variable and for each $n\geq 1$, the law of $\varepsilon_n$ is determined by
\begin{align*}
q & = \proba(\varepsilon_{2n}=\varepsilon_{2n-1}\mid \varepsilon_1, \cdots,\varepsilon_{2n-1}, q)  = 1-\proba(\varepsilon_{2n}=-\varepsilon_{2n-1}\mid \varepsilon_1, \cdots,\varepsilon_{2n-1}, q), n\in\N,
\end{align*}
and
$\varepsilon_{2n+1}=-\varepsilon_{2n}, n\in\N$. Then, consider $\calX_n :=(\varepsilon_{2n-1}+\varepsilon_{2n})/(2\sqrt q), n\in\N$.
In this way, their model fits into our setup with $\alpha=\alpha' = 2, \rho = 2H$ (see \citep[p. 209]{enriquez04simple} for details), and our Theorem \ref{thm:1} includes \citep[Corollary 3]{enriquez04simple} as a special case for $\alpha=2$ (but without tightness). 
\end{Rem}

\begin{Rem}There is non-trivial dependence between the magnitude $\calX_i/q_i^{1/\alpha'}$ and the locations $\{j=1,\dots,n: \eta_{i,j}\topp{q_i} = 1\}$, via $q$, in the aggregated model. However, the dependence disappears in the limit. It is also remarkable that while our model has three parameters $\rho,\alpha,\alpha'$, the limit Karlin stable process has only two: $\alpha\in(0,2)$ and $\beta=\rho+\alpha/\alpha'-1\in(0,1)$.

Both observations can be explained 
 by the following representation of $\zeta_{\alpha,\beta}$ (compare also \eqref{eq:suggestion} in the proof of Theorem \ref{thm:1} later): essentially, the factor $q^{-1/\alpha'}$ in \eqref{eq:X_j} introduces an effect of change of measures in the limit.  Recall the characteristic function of $\zeta_{\alpha,\beta}$ in \eqref{eq:zeta fdd}, and write
\[
\int_0^\infty \esp\abs{\summ j1d\theta_j\inddd{N(t_j q) \ \rm odd}}^\alpha q^{-\beta-1}dq = \int_0^\infty \esp\abs{\summ j1d\theta_j\frac1{q^{\gamma/\alpha}}\inddd{N(t_j q) \ \rm odd}}^\alpha q^{-\rho}dq.
\]
Equivalently, for $\alpha<2$ we have another series representation as follows
\[
\ccbb{\zeta_{\alpha,\beta}(t)}_{t\ge 0} \eqd\ccbb{\sif\ell1 \frac{\varepsilon_\ell}{\Gamma_\ell^{1/\alpha}}q_\ell^{-\gamma/\alpha}\inddd{N_\ell(tq_\ell)\ \rm odd}}_{t\ge 0},
\]
where $\Gamma_\ell$ and $q_\ell$ are such that $\sif\ell1 \ddelta{\Gamma_\ell,q_\ell}$ is 
 a Poisson point process on $\R_+\times\R_+$ with intensity measure $dx (\beta/\Gamma(1-\beta))q^{-\rho}dq$, independent from the Rademacher random variables $\{\varepsilon_\ell\}_{\ell\in\N}$.
So in the limit process $\zeta_{\alpha,\beta}$,  $q^{-\gamma/\alpha}$ above eventually comes from the normalization $q^{-1/\alpha'}$ in \eqref{eq:X_j} and $q^{-\rho}$ comes from the density of $q$ (both after $1/n$-scaling as can be read from the proof later).
\end{Rem}

\begin{Rem}\label{rem:comparison}
In the randomized Karlin models \citep{durieu16infinite,durieu20infinite}, there are two sources of dependence. First, in \eqref{eq:Sn0}, the dependence is determined by the law of certain counting numbers being odd. For $\{S_n\}_{n\in\N}$ in \eqref{eq:Sn0}, with all $X_\ell=1$ it is known as an {\em odd-occupancy} process, counting by time $n$ how many urns have been sampled by an odd number of times. This process has been already investigated by \citet{karlin67central}, and the motivation of such a consideration dates
 back to \citet{spitzer64principles}. So with i.i.d.~$\{X_\ell\}_{\ell\in\N}$, $\{S_n\}_{n\in\N}$ becomes a randomized odd-occupancy process. The law of the occupancy numbers being odd, eventually, plays a crucial role in the underlying dependence structure of the limit Karlin stable process in \eqref{eq:zeta fdd} and \eqref{eq:zeta series}. Second, the original Karlin model also has a strong combinatorial flavor, as the sampling $\{Y_n\}_{n\in\N}$ induces a random partition of $\N$, essentially related to the Pitman--Yor partition with parameters $(\beta,0)$, to which the $\beta$-Sibuya distribution is intrinsically related \citep{pitman06combinatorial}.
There have been recent interests regarding various counting statistics for other combinatorial models, and often they lead to new stochastic processes of their own interest. For an example with a similar flavor, see \citep{alsmeyer17functional}.

In a sense, our proposed aggregated model and the limit theorems indicate that the counting of odd-occupancy numbers is much more fundamental than the underlying random partitions for the randomized Karlin models: our proposed model has a much less combinatorial flavor than the Karlin models, and yet they lead to the same scaling limits. On the other hand, it is well known that aggregated models with random coefficients may lead to stochastic processes with abnormal asymptotic behaviors (e.g.~\citep{kaj08convergence,mikosch07scaling}), 
and our result here provides another such example.
\end{Rem}
{The paper is organized as follows.} Section \ref{sec:KSP}
collects a few facts about Karlin stable processes. In Section \ref{sec:CLT} we prove Theorem \ref{thm:1}. In Section \ref{sec:PP} we prove Theorem \ref{thm:PPP} and explain its further connection to the so-called Karlin random sup-measures.
\section{Representations for Karlin stable processes}\label{sec:KSP}
We collect some facts on the Karlin stable processes that can be derived from the general Karlin stable set-indexed processes \citep{fu21simulations}. Let $Q_\beta$ denote the $\beta$-Sibuya distribution \citep{sibuya79generalized}, so that
\equh\label{eq:Sibuya}
\proba(Q_\beta = \ell) = \frac\beta{\Gamma(1-\beta)}\frac{\Gamma(\ell-\beta)}{\Gamma(\ell+1)}, \ell\in\N.
\eque
Let $\calC_\ell := \bigcup_{i=1}^{\ell}\{U_i\}$ denote the union of $\ell$ i.i.d.~random variables $\{U_i\}_{i\in\N}$ that are uniformly distributed over $(0,1)$. If $\ell$ is a random variable, then assume in addition that $\{U_i\}_{i\in\N}$ are independent from $\ell$.
\begin{Lem}\label{lem:1}
For $\alpha>0$ and $\beta\in(0,1)$,
\equh\label{eq:Poisson-Sibuya}
\int_0^\infty \esp_q \abs{\summ j1d\theta_j \inddd{N(t_jq)\ \rm odd}}^\alpha \frac\beta{\Gamma(1-\beta)}q^{-\beta-1}dq = \esp \abs{\summ j1d \theta_j\inddd{|\calC_{Q_\beta}\cap[0,t_j]|\ \rm odd}}^\alpha.
\eque
Therefore, \eqref{eq:zeta series} holds with $\alpha\in(0,2)$.
\end{Lem}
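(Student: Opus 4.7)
The plan is to prove the identity \eqref{eq:Poisson-Sibuya} by reducing both sides, via Walsh--Fourier decomposition on $\{0,1\}^d$, to the same expression $-\sum_{S\neq\emptyset}\hat F(S)(2L_S)^\beta$ for certain overlap lengths $L_S$, and then to deduce \eqref{eq:zeta series} from the classical LePage representation of symmetric $\alpha$-stable series. The critical feature of the test function $F(\vv b):=\abs{\sum_j\theta_jb_j}^\alpha$ on $\{0,1\}^d$ is that $F(\vv 0)=0$, so its Walsh expansion $F=\sum_{S\subseteq\{1,\dots,d\}}\hat F(S)\chi_S$ with characters $\chi_S(\vv b):=(-1)^{\sum_{j\in S}b_j}$ satisfies $\sum_S\hat F(S)=0$, which is exactly the cancellation needed to tame the divergence of $q^{-\beta-1}$ at $q=0$.

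For the Poisson side, order $0=t_0<t_1<\dots<t_d$ and set $L_S:=\sum_{k=1}^d(t_k-t_{k-1})\ind_{|S\cap\{k,\dots,d\}|\ \rm odd}$. Since $(-1)^{\ind_{n\ \rm odd}}=(-1)^n$ for $n\in\Z$, independence of the Poisson increments $N(t_kq)-N(t_{k-1}q)$ and a short computation with $\esp z^{N(\mu)}=e^{\mu(z-1)}$ give $\esp_q\chi_S(\ind_{N(t_1q)\ \rm odd},\dots,\ind_{N(t_dq)\ \rm odd})=e^{-2qL_S}$. Combining with $\sum_S\hat F(S)=0$ and the $\beta$-stable Lévy-measure identity $\int_0^\infty(1-e^{-sq})\frac{\beta}{\Gamma(1-\beta)}q^{-\beta-1}dq=s^\beta$, the left-hand side of \eqref{eq:Poisson-Sibuya} reduces to $-\sum_{S\ne\emptyset}\hat F(S)(2L_S)^\beta$.

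For the Sibuya side, conditioning on $Q_\beta$ and using that the $\{U_i\}$ are i.i.d.~uniform on $(0,1)$ factorizes the character expectation as $\esp[\chi_S(J_1,\dots,J_d)\mid Q_\beta]=(1-2L_S)^{Q_\beta}$, where $J_j:=\ind_{|\calC_{Q_\beta}\cap[0,t_j]|\ \rm odd}$, because a single uniform $U$ satisfies $\esp(-1)^{|\{j\in S:U\leq t_j\}|}=1-2L_S$ (a direct piecewise-constant calculation on the intervals $(t_{k-1},t_k]$). Since $t_j\in[0,1]$ forces $1-2L_S\in[-1,1]$, the $\beta$-Sibuya probability generating function $\esp s^{Q_\beta}=1-(1-s)^\beta$ applies and yields $\esp\chi_S(J)=1-(2L_S)^\beta$; using $\sum_S\hat F(S)=0$ once more, the right-hand side also reduces to $-\sum_{S\ne\emptyset}\hat F(S)(2L_S)^\beta$, establishing \eqref{eq:Poisson-Sibuya}.

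For the series representation \eqref{eq:zeta series}, the point process $\sum_\ell\delta_{\varepsilon_\ell\Gamma_\ell^{-1/\alpha}}$ is Poisson on $\R\setminus\{0\}$ with intensity $\frac{\alpha}{2}\abs r^{-\alpha-1}dr$, so Campbell's formula applied with the i.i.d.~marks $g_\ell(t):=\ind_{\sum_{j=1}^{Q_{\beta,\ell}}\ind_{U_{\ell,j}\le t}\ \rm odd}$ gives the characteristic function of the right-hand side of \eqref{eq:zeta series} as $\exp\pp{-\mathsf C_\alpha^{-1}\esp\abs{\sum_j\theta_jg(t_j)}^\alpha}$; substituting \eqref{eq:Poisson-Sibuya} then recovers precisely \eqref{eq:zeta fdd}. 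The only delicate step is the character-expectation bookkeeping for $L_S$ and verifying $1-2L_S\in[-1,1]$ to legitimize the Sibuya PGF (which is why the series representation is restricted to $t\in[0,1]$); LePage convergence for $\alpha\in(0,2)$ and the Campbell formula are classical, and the entire statement is a special case of \citep[Theorem 2.1]{fu21simulations}.
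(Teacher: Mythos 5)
Your proof is correct, but it takes a genuinely different route from the paper's. The paper proves \eqref{eq:Poisson-Sibuya} by conditioning the rate-$q$ Poisson process on the number $\ell$ of its points in $[0,1]$: given $N^{(q)}(1)=\ell$ the points are i.i.d.\ uniform on $(0,1)$, so the conditional expectation coincides with the $\calC_\ell$-expectation on the right-hand side, and the whole identity reduces to the single computation $\frac{\beta}{\Gamma(1-\beta)}\int_0^\infty \frac{q^\ell}{\Gamma(\ell+1)}e^{-q}q^{-\beta-1}dq=\proba(Q_\beta=\ell)$, i.e.\ to recognizing the Sibuya law as the $q^{-\beta-1}dq$-mixture of the Poisson$(q)$ laws. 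You instead diagonalize the test function $F(\vv b)=\sabs{\sum_j\theta_jb_j}^\alpha$ in the Walsh basis and match coefficients: on the Poisson side the character expectation is $e^{-2qL_S}$, which integrated against the L\'evy measure gives $-\hat F(S)(2L_S)^\beta$ after using the cancellation $\sum_S\hat F(S)=F(\vv 0)=0$ to tame the singularity at $q=0$; on the Sibuya side the factorization over the i.i.d.\ uniforms gives $\esp(1-2L_S)^{Q_\beta}=1-(2L_S)^\beta$ via the PGF, and the same cancellation yields the same coefficients. I checked the definition of $L_S$, the identity $\esp_q(-1)^{\sum_{j\in S}N(t_jq)}=e^{-2qL_S}$ via independent increments, the piecewise computation $\esp(-1)^{\sabs{\{j\in S:\,U\le t_j\}}}=1-2L_S$, and the range condition $1-2L_S\in[-1,1]$ (which is precisely where $t_j\le 1$ enters); all are correct. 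The paper's argument is shorter and isolates the probabilistic content (Sibuya as a mixed Poisson), while yours is more computational but makes the exponent $(2L_S)^\beta$ explicit and works verbatim for any test function on $\{0,1\}^d$ vanishing at the origin. The final passage from \eqref{eq:Poisson-Sibuya} to \eqref{eq:zeta series} is in both treatments the standard LePage/series representation of S$\alpha$S processes, cited rather than reproved, so no gap there.
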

\begin{Rem}
Throughout, with a little abuse of notations,  when writing $\esp_q(\cdots)$ or $\proba_q(\cdots)$, we mean that the $q$ appears in $(\cdots)$ is viewed as a fixed constant instead of a random variable (e.g., $\esp_q(\cdots)$ on the left-hand side of \eqref{eq:Poisson-Sibuya} is viewed then as a function of $q$).
\end{Rem}
\begin{proof}[Proof of Lemma \ref{lem:1}]
First, let $\{N\topp q(t)\}_{t\ge 0}$ denote a Poisson process on $\R_+$ with constant intensity density $q$ on $\R_+$. Then, for every $q>0$ fixed,
\begin{align*}
\esp_q \abs{\summ j1d\theta_j \inddd{N(t_jq)\ \rm odd}}^\alpha & = \esp_q \abs{\summ j1d\theta_j \inddd{N\topp q(t_j)\ \rm odd}}^\alpha\\
& = \sif\ell1\esp_q \pp{\abs{\summ j1d\theta_j \inddd{N\topp q(t_j)\ \rm odd}}^\alpha\mmid N\topp q(1) = \ell}\proba_q(N\topp q(1) = \ell)\\
& = \sif\ell1\esp_q \abs{\summ j1d\theta_j \inddd{\abs{\calC_\ell\cap[0,t_j]}\ \rm odd}}^\alpha\proba_q(N\topp q(1) = \ell).
\end{align*}
Then, the left-hand side of \eqref{eq:Poisson-Sibuya} becomes, by Fubini's theorem,
\[
\sif\ell1\esp \abs{\summ j1d\theta_j \inddd{\abs{\calC_\ell\cap[0,t_j]}\ \rm odd}}^\alpha \cdot \frac\beta{\Gamma(1-\beta)}\int_0^\infty \proba_q(N\topp q(1) = \ell)q^{-\beta-1}dq.
\]
It remains to notice that the second factor above is simply
\[
\frac\beta{\Gamma(1-\beta)}\int_0^\infty \proba_q(N\topp q(1) = \ell)q^{-\beta-1}dq  = \frac\beta{\Gamma(1-\beta)}\int_0^\infty \frac{q^\ell}{\Gamma(\ell+1)}e^{-q}q^{-\beta-1}dq = \proba(Q_\beta = \ell).
\]
The desired identity \eqref{eq:Poisson-Sibuya} now follows. 
 \eqref{eq:zeta series} then follows by the well-known equivalence between stochastic-integral and series representations of S$\alpha$S processes \citep[Theorem 3.10.1]{samorodnitsky94stable}.
\end{proof}

The Karlin stable process $\zeta_{\alpha,\beta}$ has the following stochastic-integral representation \citep{samorodnitsky94stable}
\begin{equation}\label{eq:KSP}
\ccbb{\zeta_{\alpha,\beta}(t)}_{t\ge 0}\eqd\ccbb{\int_{\R_{+}\times\Omega'} \inddd{N'(tq)(\omega') \text { odd}} M_{\alpha}(dq, dw')}_{t\ge 0},
\end{equation}
where
$M_{\alpha}$ is a $\rm{S\alpha S}$ random measure on $\R_{+}\times \Omega'$, for another probability space $(\Omega',\proba')$ different from the one where the stochastic integral is defined on, with control measure $dm=(\beta/(\Gamma(1-\beta)\mathsf C_\alpha))q^{-\beta-1}dq\proba'\pp{d\omega'}$.
This is a standard so-called {\em doubly stochastic representation}, where the random measure $M_\alpha$ is defined on the by-default probability space $(\Omega,\proba)$, and $(\Omega',\proba')$ is a different space.
This representation is notationally convenient but not needed in our proofs. We refer the readers to \citep{samorodnitsky94stable} for more details regarding stochastic-integral representation for stable processes.

Note that the characteristic function \eqref{eq:zeta fdd} and the stochastic-integral representation \eqref{eq:KSP} both allow $\alpha=2$, and in this case the Karlin stable process becomes a fractional Brownian motion with Hurst index $\beta/2$ up to a multiplicative constant. A quick derivation is as follows, using \eqref{eq:KSP} and stochastic integrals with respect to Gaussian random measures: for $0<s<t$,
\begin{align*}
\cov\pp{\zeta_{2,\beta}(s),\zeta_{2,\beta}(t)} &=
\frac \beta{\mathsf C_\alpha\Gamma(1-\beta)}\int_0^\infty\proba_q\pp{N(sq)\ {\rm odd}, N(tq)\ {\rm odd}}q^{-\beta-1}dq\\
& = \frac\beta{\mathsf C_\alpha\Gamma(1-\beta)}\int_0^\infty \frac12(1-e^{-2qs})\frac 12(1+e^{-2q(t-s)})q^{-\beta-1}dq\\
& = \frac14\frac\beta{\mathsf C_\alpha\Gamma(1-\beta)}\int_0^\infty (1-e^{-2qs}-e^{-2qt}+e^{-2q(t-s)})q^{-\beta-1}dq.
\end{align*}
(Recall that for a Poisson random variable $N$ with $\lambda = \esp N$,  $\proba\pp{N \text{ is odd}}=\pp{1-e^{-2\lambda}}/2$.)
Using $\int_0^\infty (1-e^{-rq})q^{-\beta-1}dq = r^\beta \Gamma(1-\beta)/\beta$, we have
\equh\label{eq:fBm}
\cov\pp{\zeta_{2,\beta}(s),\zeta_{2,\beta}(t)} = 
2^{\beta-1}\mathsf C_\alpha^{-1}
\cdot \frac12\pp{s^\beta+t^\beta-|t-s|^\beta}, s,t\ge0.
\eque
So $\{\zeta_{2,\beta}(t)\}_{t\ge 0} \eqd 
2^{(\beta-1)/2}\mathsf C_\alpha^{-1/2}
\ccbb{\B^{\beta/2}(t)}_{t\ge 0}$, where $\{\B^{\beta/2}(t)\}_{t\ge 0}$ is a fractional Brownian motion with Hurst index $\beta/2$.

\section{Proof of Theorem \ref{thm:1}}\label{sec:CLT}
The proof is by computing the asymptotic characteristic function.
Consider the characteristic function $\phi_\calX(\theta) := \esp\exp(i\theta \calX)$. It is known that both two assumptions on $\calX$ in Theorem \ref{thm:1} can be unified into the following condition
\[
1-\phi_\calX(\theta)\sim \sigma_\calX^{\alpha}\abs{\theta}^\alpha \text{ as } \theta\to 0,
\]
where $\sigma_\calX^2:=\esp \calX^2/2<\infty$ and
$\sigma_\calX^\alpha:=C_\calX/\mathsf C_\alpha$ when $\alpha\in(0,2)$ 
(see \citep[Theorem 8.1.10]{bingham87regular} for the second).

Recall the characteristic function of the Karlin stable process in \eqref{eq:zeta fdd}. We shall rewrite it in a more convenient expression for our proof.
Throughout, for $d\in\N$,  write
\[
\Lambda_d:=\{0,1\}^d\setminus \{(0,\cdots,0)\},
\]
and for $\vv{\theta} = (\theta_1,\dots,\theta_d)\in\R^d, \vv\delta = (\delta_1,\dots,\delta_d)\in\Lambda_d$,
\[
\aa{\vv{\theta},\vv{\delta}}:=\sum_{j=1}^{d}\theta_j\delta_j.
\]
Recall that $N(t)$ is a standard Poisson process on $\R_{+}$, and write, with $\vv\delta\in\Lambda_d$ and $\vvt = (t_1,\dots,t_d)\in [0,1]^d$,
\[
 \ccbb{
\vv N(q\vvt)=\vv{\delta}\text{ mod } 2
 }  \equiv
 \ccbb{
 N(qt_j)=\delta_j  \text{ mod } 2 \text{ for all } j=1,\cdots, d
 }.
\]
Observe
\[
\esp_q\abs{\sum_{j=1}^{d}\theta_j \inddd{N(t_jq)\ \rm { odd}}}^{\alpha} = \sum_{\vv\delta\in\Lambda_d}|\saa{\vv\theta,\vv\delta}|^\alpha \proba_q(\vv N(q\vv t) = \vv\delta\ \rm mod~2).
\]
Therefore,
with
\[
\mathfrak m_{\alpha,\beta}(\vvt,\vv\delta):=\frac\beta{\Gamma(1-\beta)\mathsf C_\alpha}\int_{0}^{\infty}\proba_q\pp{\vv{N}(q\vvt)= \vv{\delta}\text{ mod } 2}q^{-\beta-1} dq,
\]
we see that \eqref{eq:zeta fdd} becomes
\[
\esp\exp\pp{i\sum_{j=1}^{d}\theta_j  \zeta_{\alpha,\beta}(t_j)} = \exp\pp{-\sum_{\vv\delta\in\Lambda_d}|\saa{\vv\theta,\vv\delta}|^\alpha  \mathfrak m_{\alpha,\beta}(\vvt,\vv\delta)
}.
\]

Now for the aggregated model, we start by computing the characteristic function of the finite-dimensional distributions of $S_{\floor{nt}}/a_n$ (recall $S_n$ in \eqref{eq:S_n}).
 Write
 \[
 A_{n,q}:=\ccbb{\tau_n\topp q \text { is odd}} \qmand
A^{\delta}:=\begin{cases}
A,& \mbox{ if } \delta=1,\\
A^{c},& \mbox{ if }\delta=0.
\end{cases}
\]
For any $d\in\N$, $\vv{t}\in [0,1]^d$,
write $n_j:=\floor{nt_j}, j=1, \cdots,d$, and
\[
 A_{n,q,\vvt,\vv\delta} := \bigcap_{i=1}^dA_{n_j,q}^{\delta_j}.
\]
Then, for $\vv{\theta}\in\R^d$,
 \begin{align}
  \esp\exp\pp{i\sum_{j=1}^{d}\theta_j\frac{S_{\floor{nt_j}}}{a_n}}
  &=  \esp\exp\pp{i\sum_{j=1}^{d}\frac{\theta_j\calX}{a_nq^{1/\alpha'}}\ind_{A_{n_j,q}}}\nonumber =  \esp\exp\pp{i\sum_{\vv{\delta}\in\Lambda_d}\frac{\saa{\vv{\theta},\vv{\delta}} \calX}{a_nq^{1/\alpha'}}\ind_{A_{n,q,\vvt,\vv\delta}}} \nonumber\\
& =  \esp\esp \pp{\exp\pp{i\sum_{\vv{\delta}\in\Lambda_d}\frac{\saa{\vv{\theta},\vv{\delta}} \calX}{a_nq^{1/\alpha'}}\ind_{A_{n,q,\vvt,\vv\delta}}}\mmid\calX,q}.  \label{eq:fdd}
 \end{align}
In the second step we used the fact that $A_{n,q,\vvt,\vv\delta}$ are disjoint for different $\vv\delta\in\Lambda_d$. Using the disjointness again, the inner conditional expectation of \eqref{eq:fdd} becomes
\begin{align*}
 \esp\pp{\prod_{\vv\delta\in\Lambda_d}\exp\pp{\frac{i\aa{\vv\theta,\vv\delta}\calX}{a_nq^{1/\alpha'}}\ind_{A_{n,q,\vvt,\vv\delta}}}\mmid\calX,q}& = \esp\pp{\sum_{\vv\delta\in\Lambda_d}
\exp\pp{\frac{i\aa{\vv\theta,\vv\delta}\calX}{a_nq^{1/\alpha'}}}\ind_{A_{n,q,\vvt,\vv\delta}}+\ind_{A_{n,q,\vvt,\vv0}}\mmid\calX,q}\\
& = \esp\pp{1-\sum_{\vv\delta\in\Lambda_d}
\pp{1-\exp\pp{\frac{i\aa{\vv\theta,\vv\delta}\calX}{a_nq^{1/\alpha'}}}}\ind_{A_{n,q,\vvt,\vv\delta}}\mmid\calX,q}.
\end{align*}
where in the second step above we used the fact that $\sum_{\vv\delta\in\{0,1\}^d}\ind_{A_{n,q,\vvt,\vv\delta}} = 1$.
So we arrive at
\[
  \esp\exp\pp{i\sum_{j=1}^{d}\theta_j\frac{S_{\floor{nt_j}}}{a_n}}
 = 1-\sum_{\vv\delta\in\Lambda_d}\esp\pp{\pp{1-\phi_\calX\pp{\frac{\aa{\vv\theta,\vv\delta}}{a_nq^{1/\alpha'}}} }\proba_q(A_{n,q,\vvt,\vv\delta})}.
\]
The key is now to establish
\begin{equation}\label{eq:key est}
\Phi_n:= \esp\bb{\pp{1-\phi_\calX\pp{\frac{\saa{\vv{\theta},\vv{\delta}}}{a_nq^{1/\alpha'}}}}\proba_q(A_{n,q,\vvt,\vv\delta})}
  \sim \frac{ 1}{m_n} \abs{\saa{\vv{\theta},\vv{\delta}}}^\alpha  \mathfrak m_{\alpha,\beta}(\vvt,\vv\delta) \text { as } n\to\infty.
\end{equation}
It then follows that
\begin{align*}
\limn  \esp\exp\pp{i\sum_{j=1}^{d}\theta_j\frac{\widehat{S}_{n}(t_j)}{a_n}}
  &= \limn\bb{
  \esp\exp\pp{i\sum_{j=1}^{d}\theta_j\frac{S_{\floor{nt_j}}}{a_n}}}^{m_n}\\
  & =  \exp\pp{-\sum_{\vv{\delta}\in\Lambda_d}\abs{\saa{\vv{\theta},\vv{\delta}}}^\alpha  \mathfrak m_{\alpha,\beta}(\vvt,\vv\delta)} = \esp\exp\pp{i\sum_{j=1}^{d}
  \theta_j\zeta_{\alpha,\beta}(t_j)}.
\end{align*}

Therefore, it remains to prove \eqref{eq:key est}.
As a preparation, note that for $q>0$ fixed, by standard Poisson approximation for binomial distribution with parameter $(n,q/n)$,  the point process
\[
\summ i1n \delta_{i/n}\inddd{\eta\topp{q/n}_i = 1}
\]
converges in distribution to a Poisson point process on $(0,1)$ with intensity $q$. As a consequence,
\[
\limn\proba_{q}(A_{n,q/n,\vvt,\vv\delta}) = \proba_q\pp{\vv N(q\vv t) = \vv\delta\ \rm mod~2}, \mfa q>0.
\]
Moreover, the above convergence is uniform on any neighborhood of zero, that is,
\[
\limn \sup_{q\in[0,\epsilon]}\frac{\proba_{q}(A_{n,q/n,\vvt,\vv\delta})}{\proba_q\pp{\vv N(q\vv t) = \vv\delta\ {\rm mod}~2}} = 1, \mfa \epsilon>0.
\]
This can be checked by writing explicitly the expressions for the two probabilities. For the sake of simplicity we write only for $d=2, t_1<t_2$ and $\delta_1 = \delta_2 = 1$:
\begin{align*}
\proba_{q}(A_{n,q/n, (t_1,t_2),(1,1)}) &= \frac12\pp{1-\pp{1-\frac {2q}n}^{\floor{nt_1}}}\cdot \frac12\pp{1+\pp{1-\frac {2q}n}^{\floor{nt_2}-\floor{nt_1}}}\\
& \to \frac12 \pp{1-e^{-2qt_1}}\cdot \frac12\pp{1+e^{-2q(t_2-t_1)}} = \proba_q(N(qt_1)\ {\rm odd}, N(qt_2)\ {\rm odd}),
\end{align*}
and the uniform convergence is readily checked.

We first establish a lower bound for $\Phi_n$ in \eqref{eq:key est}.
If we restrict expectation to $q\in[\epsilon/n,\epsilon\inv/n]$ for $\epsilon\in(0,1)$ instead of $q\in[0,1]$, then it follows that
\begin{align}
\Phi_n & \ge \Phi_{n,\epsilon}:=  \int_{\epsilon}^{\epsilon\inv} \pp{1-\phi_\calX\pp{\frac{\saa{\vv{\theta},\vv{\delta}}}{a_nn^{-1/\alpha'}q^{1/\alpha'}}}}\proba_{q}(A_{n,q/n,\vvt,\vv\delta})\pp{\frac qn}^{-\rho}L(n/q)\frac{dq}n\nonumber\\
& \sim  \int_\epsilon^{\epsilon\inv}\sigma_\calX^\alpha|\saa{\vv\theta,\vv\delta}|^\alpha (a_n n^{-1/\alpha'}q^{1/\alpha'})^{-\alpha}\proba_q(\vv N(q\vv t) =\vv\delta \ {\rm mod~2})\pp{\frac qn}^{-
\rho} L(n/q)\frac{dq}n\label{eq:suggestion}\\
& = \frac{|\saa{\vv\theta,\vv\delta}|^\alpha}{m_n}\frac{\beta }{\Gamma(1-\beta)\mathsf C_\alpha}\int_\epsilon^{\epsilon\inv} \frac{L(n/q)}{L(n)}q^{-\gamma-\rho}\proba_q(\vv N(q\vv t) = \vv\delta\ {\rm mod}~2)dq\nonumber\\
& \sim  \frac{|\saa{\vv\theta,\vv\delta}|^\alpha}{m_n}\frac{\beta }{\Gamma(1-\beta)\mathsf C_\alpha}\int_\epsilon^{\epsilon\inv} q^{-\gamma-\rho}\proba_q(\vv N(q\vv t) = \vv\delta\ {\rm mod}~2)dq,\nonumber
\end{align}
which is the same as the right-hand side of \eqref{eq:key est} by taking $\epsilon\downarrow 0$. In the second line above we need $a_nn^{-1/\alpha'}\to\infty$ as $n\to\infty$, which is the same as our assumption on $m_n$ in \eqref{eq:m_n}. Note that   we need to restrict to a compact interval $[\epsilon,\epsilon\inv]$ bounded away from 0 to have the uniform convergence 
in
 the second and the fourth steps above.

It remains to show that $\lim_{\delta\downarrow 0}\limsupn m_n(\Phi_n-\Phi_{n,\epsilon}) = 0$, and we need to work with the intervals $[0,\epsilon/n]$ and $[\epsilon\inv/n,1]$.  This part can be done, and a similar treatment shows up in the proof of Theorem \ref{thm:PPP} (more precisely, see \eqref{eq:Psi} and \eqref{eq:upper 1}). Since Theorem \ref{thm:PPP} is a stronger result than Theorem \ref{thm:1}, we provide full details therein and omit the rest of the proof here.
\section{Limit theorems for point processes}\label{sec:PP}
We first prove Theorem \ref{thm:PPP} 
in Section \ref{sec:proof PPP}. It leads to a second proof of Theorem \ref{thm:1} in Section \ref{sec:second proof}, and also the convergence of the so-called Karlin random sup-measures introduced in \citep{durieu18family} in Section \ref{sec:RSM}.
\subsection{A preparation}
We start by proving a weaker version of Theorem \ref{thm:PPP}.
Recall that
\[
a_n = \pp{C_\calX \frac{\Gamma(1-\beta)}\beta \cdot n^\beta m_nL(n)}^{1/\alpha}.
\]
We shall provide two versions, the second with the alternating signs $(-1)^{\tau_{i,j}\topp {q_i}}$ taken into account but the first not. For each $\ell\in\N$, let $U_{\ell,1:Q_{\beta,\ell}}<\cdots<U_{\ell,Q_{\beta,\ell}:Q_{\beta,\ell}}$ denote the order statistics of $\{U_{\ell,j}\}_{j=1,\dots,Q_{\beta,\ell}}$.
\begin{Prop}\label{prop:PPP}
Under the assumption of Theorem \ref{thm:PPP}, we have
\equh\label{eq:PP_convergence1}
\wt \xi_n:=\summ i1{m_n}\summ j1n\eta_{i,j}\topp{q_i}\ddelta{\calX_i/(a_nq_i^{1/\alpha'}),j/n} \weakto \xi,
\eque
where $\xi$ is as in \eqref{eq:xi}, and
\equh\label{eq:PPP'}
\what\xi_n:=\summ i1{m_n}\summ j1n \eta_{i,j}\topp{q_i}\ddelta{(-1)^{\tau_{i,j}\topp{q_i}}\calX_i/(a_n q_i^{1/\alpha'}),j/n} \weakto \what \xi:=\sif\ell1\summ j1{Q_{\beta,\ell}}\ddelta{(-1)^j \varepsilon_{\ell,j}\Gamma_\ell^{-1/\alpha},U_{j:Q_{\beta,\ell}}}.
\eque
\end{Prop}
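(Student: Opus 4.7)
My plan is to establish both claims in Proposition \ref{prop:PPP} via Laplace-functional convergence. Since $\wt\xi_n=\sum_{i=1}^{m_n}\mu_{n,i}$ is a sum of $m_n$ i.i.d.~random point measures, where
\[
\mu_{n,1}:=\sum_{j=1}^n \eta_{1,j}\topp{q_1}\ddelta{V_{n,1},\, j/n}, \quad V_{n,1}:=\frac{\calX_1}{a_n q_1^{1/\alpha'}},
\]
for any continuous $f\ge0$ on $(\wb\R\setminus\{0\})\times[0,1]$ supported in $\{|x|\ge r\}\times[0,1]$ we have $\esp e^{-\wt\xi_n(f)}=(1-\Psi_n(f))^{m_n}$ with $\Psi_n(f):=1-\esp e^{-\mu_{n,1}(f)}$. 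Thus it suffices to show $m_n\Psi_n(f)\to\Lambda(f):=-\log\esp e^{-\xi(f)}$, which gives the claimed convergence in $\mathfrak M_p((\wb\R\setminus\{0\})\times[0,1])$.

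Conditioning first on $(\calX_1,q_1)$ and using the conditional Bernoulli$(q_1)$ law of the $\eta_{1,j}\topp{q_1}$,
\[
\Psi_n(f)=\esp\bb{1-\prod_{j=1}^n\pp{1-q_1\pp{1-e^{-f(V_{n,1},\, j/n)}}}}.
\]
Changing variables $q_1=q/n$ and restricting to $q\in[\epsilon,\epsilon^{-1}]$, I combine two asymptotic inputs. First, by Riemann-sum approximation,
\[
\prod_{j=1}^n\bb{1-(q/n)(1-e^{-f(v,\,j/n)})}\longrightarrow\exp\pp{-q\int_0^1(1-e^{-f(v,t)})\,dt}=:1-g(v,q),
\]
uniformly in $v$ on compact subsets of $\R\setminus\{0\}$. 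Second, \eqref{eq:varepsilon_RV} yields the vague convergence $s_n^\alpha\proba(\calX/s_n\in\cdot)\to C_\calX(\alpha/2)|x|^{-\alpha-1}dx$ with $s_n=a_n q^{1/\alpha'}/n^{1/\alpha'}$. Using $\gamma+\rho=\beta+1$, slow variation of $L$, and the definition of $a_n$ to collapse the scaling constants, the truncated contribution $m_n\Psi_n^{(\epsilon)}(f)$ converges to
\[
\frac{\beta}{\Gamma(1-\beta)}\int_\epsilon^{\epsilon^{-1}}\int_{\R\setminus\{0\}}g(x,q)\,\frac{\alpha}{2}|x|^{-\alpha-1}dx\,q^{-\beta-1}dq.
\]
Sending $\epsilon\downarrow0$, a straightforward extension of Lemma \ref{lem:1} (the same Fubini argument, using $\proba(Q_\beta=\ell)=\frac\beta{\Gamma(1-\beta)}\int_0^\infty\proba_q(N(q)=\ell)q^{-\beta-1}dq$) identifies $\frac\beta{\Gamma(1-\beta)}\int_0^\infty g(x,q)q^{-\beta-1}dq$ with $1-\esp\prod_{j=1}^{Q_\beta}e^{-f(x,U_j)}$, so the outer integral against $(\alpha/2)|x|^{-\alpha-1}dx$ is exactly $\Lambda(f)$, the Laplace exponent of the cluster point process $\xi$.

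The main obstacle is the truncation estimate $\lim_{\epsilon\downarrow0}\limsup_n m_n\abs{\Psi_n(f)-\Psi_n^{(\epsilon)}(f)}=0$. For $q_1<\epsilon/n$ the crude bound $1-\prod_j(\cdots)\le n q_1\|f\|_\infty$ combined with the density $q_1^{-\rho}L(1/q_1)$ (integrable near $0$ since $\rho<1$) produces a contribution that vanishes after multiplication by $m_n$ in view of the definition of $a_n$ and \eqref{eq:m_n}. For $q_1>\epsilon^{-1}/n$, one uses $1-\prod_j(\cdots)\le\inddd{|V_{n,1}|\ge r}$ together with $\proba(|\calX|>a_n q_1^{1/\alpha'} r)\lesssim (a_n q_1^{1/\alpha'})^{-\alpha}\wedge 1$, and it is here that the hypothesis $m_n\le C n^\kappa$ with $\kappa<2\beta/(\alpha-2)$ when $\alpha\ge2$ is invoked, to control the regime $q_1\asymp 1$ where the tail asymptotics no longer pin down the size of $\calX_1$. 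Finally, the convergence $\what\xi_n\weakto\what\xi$ follows along the same lines once one observes that, conditionally on the ordered positions $j_1<\cdots<j_N$ of the $1$'s among $\{\eta_{1,j}\topp{q_1}\}_{j=1}^n$, one has $(-1)^{\tau_{1,j_k}\topp{q_1}}=(-1)^k$, so in the computation above $f(x,t)$ is replaced by $f((-1)^k x,t)$ at the $k$-th atom of the cluster; this reproduces the order statistics $U_{1:Q_{\beta,\ell}}<\cdots<U_{Q_{\beta,\ell}:Q_{\beta,\ell}}$ decorated with alternating signs in $\what\xi$.
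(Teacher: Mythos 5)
Your overall strategy is the same as the paper's (Laplace functionals of the i.i.d.\ cluster measures, Poissonization of the occupied sites, regular variation for the magnitude, and a Sibuya/Fubini identification of the limit exponent); the paper merely factors the hard analytic content into Proposition \ref{prop:1} before running the Laplace-functional computation, whereas you fuse the two. The main term over $q\in[\epsilon,\epsilon^{-1}]$ and the regime $q_1>\epsilon^{-1}/n$ are fine (for the latter, uniform regular variation applies because $a_n n^{-1/\alpha'}\to\infty$ by \eqref{eq:m_n}, followed by Karamata; note this regime does \emph{not} require $m_n\le Cn^\kappa$ --- that hypothesis is only needed later, in the small-jump bound of Lemma \ref{lem:W} for Theorem \ref{thm:PPP}, and plays no role in Proposition \ref{prop:PPP}). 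Your treatment of the signs via $(-1)^{\tau_{1,j_k}^{(q_1)}}=(-1)^k$ along the ordered occupied positions is also correct and is exactly how the paper disposes of \eqref{eq:PPP'}.

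There is, however, a genuine gap in the truncation at $q_1<\epsilon/n$. The bound $1-\prod_j(1-q_1(1-e^{-f}))\le nq_1\|f\|_\infty$ discards the fact that $f$ vanishes on $[-r,r]\times[0,1]$, and the resulting estimate
\begin{equation*}
m_n\, n\,\|f\|_\infty\int_0^{\epsilon/n}q^{1-\rho}L(1/q)\,dq \;\asymp\; \epsilon^{2-\rho}\, m_n n^{\rho-1}L(n)
\end{equation*}
\emph{diverges}, since \eqref{eq:m_n} says precisely that $m_n n^{\rho-1}L(n)\to\infty$. You must retain the indicator and bound this contribution by $m_n n\int_0^{\epsilon/n}q^{1-\rho}L(1/q)\,\wb F_{|\calX|}(r a_n q^{1/\alpha'})\,dq$; and even then the estimate is delicate, because for $q$ near $0$ the argument $a_nq^{1/\alpha'}$ need not be large, so the asymptotics $\wb F_{|\calX|}(x)\sim C_\calX x^{-\alpha}$ cannot be applied. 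The paper's proof of Proposition \ref{prop:1} handles exactly this by splitting $[0,\epsilon/n]$ further at $D_{n,\delta,x}=(D_\delta/(a_nx))^{\alpha'}$, using $\wb F_{|\calX|}\le 1$ below the split and a Potter bound above it, and then verifying that both pieces are $o(1/m_n)$ using $\gamma+\rho<2$, $\rho<1$ and the standing condition $n^{\beta-\gamma}m_nL(n)\to\infty$. This is the most technical step of the whole argument and is missing from your write-up; as stated, your small-$q$ bound is not merely unproved but false.
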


Throughout, 
we let $\wt V_\alpha$ denote a symmetrized $\alpha$-Pareto random variable (symmetric and $\proba(|\wt V_\alpha|>x) = x^{-\alpha}, x\ge 1$).
The following is the key step.
\begin{Prop}\label{prop:1}
Introduce
\[
\Omega_n(x):=\ccbb{\frac{|\calX|}{a_nq^{1/\alpha'}}>x, \tau_n\topp q\ne 0}.
\]
Then,
\equh\label{eq:Omega_n}
\proba\pp{\Omega_n(x)} \sim \frac1{m_n} x^{-\alpha}, \mfa x>0,
\eque
and
\equh\label{eq:joint limit}
\calL\pp{\tau_n\topp q, nq,\frac{\calX}{a_nq^{1/\alpha'}}\mmid \Omega_n(x)}
\leadsto \calL\pp{Q_\beta,G(Q_\beta-\beta), x\wt V_\alpha}, \mfa x>0,\eque
where on the right-hand side $G(Q_\beta-\beta)$ is a Gamma random variable with random parameter $Q_\beta-\beta$,  and $\wt V_\alpha$ a symmetrized $\alpha$-Pareto random variable, independent from the first two.
\end{Prop}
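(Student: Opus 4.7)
My plan is to compute, for $k \in \N$, $0 \le v_1 < v_2$, $y \ge x$, $\varepsilon \in \{\pm\}$, the joint probability
\[
P_n := \proba\pp{\tau_n\topp q = k,\ nq \in [v_1, v_2],\ \varepsilon\calX > a_n y\, q^{1/\alpha'}},
\]
show that $m_n P_n$ converges to the corresponding probability under the putative limit law of $(Q_\beta, G(Q_\beta - \beta), x \wt V_\alpha)$, and then derive \eqref{eq:Omega_n} by summing over $k$ and integrating over $v, y, \varepsilon$, and derive \eqref{eq:joint limit} by dividing. Conditioning on $q$ and on $\sign(\calX)$ and substituting $q = v/n$ casts $P_n$ as
\[
P_n = \int_{v_1}^{v_2 \wedge n} \binom{n}{k}\pp{\tfrac{v}{n}}^k \pp{1-\tfrac{v}{n}}^{n-k} \tfrac{1}{2}\proba\pp{|\calX| > a_n y (v/n)^{1/\alpha'}} (v/n)^{-\rho} L(n/v) \tfrac{dv}{n}.
\]

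Three pointwise limits drive the asymptotics. Poisson approximation gives $\binom{n}{k}(v/n)^k(1-v/n)^{n-k} \to e^{-v}v^k/k!$; the regularly varying tail gives $\proba(|\calX| > a_n y(v/n)^{1/\alpha'}) \sim C_\calX (a_n y)^{-\alpha}(v/n)^{-\gamma}$, whose use requires $a_n/n^{1/\alpha'} \to \infty$, which is exactly the content of \eqref{eq:m_n}; and slow variation gives $L(n/v)/L(n) \to 1$ for each fixed $v>0$. Combining these with $\gamma + \rho - 1 = \beta$ and the definition of $a_n$, the integrand of $P_n$ scaled by $m_n n$ tends pointwise to $\tfrac12 y^{-\alpha}\cdot\tfrac{\beta}{\Gamma(1-\beta)}\cdot e^{-v}v^{k-\beta-1}/k!$, which factors precisely as $\tfrac12 y^{-\alpha} \cdot \proba(Q_\beta = k) \cdot f_{G(k-\beta)}(v)$, using $\proba(Q_\beta = k) = (\beta/\Gamma(1-\beta))\Gamma(k-\beta)/k!$ and $f_{G(k-\beta)}(v) = e^{-v}v^{k-\beta-1}/\Gamma(k-\beta)$. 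This factorization is the asymptotic independence statement in \eqref{eq:joint limit}, with $\tfrac12 y^{-\alpha}$ being the sign-conditioned tail of the symmetrized Pareto $x \wt V_\alpha$ at level $y \ge x$.

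The main obstacle is justifying the exchange of limit with the $v$-integration (and then with the $k$-sum). I would partition $v \in (0, n)$ into $[\epsilon_0, \epsilon_0^{-1}]$, $(\epsilon_0^{-1}, n)$, and $(0, \epsilon_0)$. On the compact window, the three convergences above are uniform, so dominated convergence yields the claimed limit. On the right tail, $(1-v/n)^{n-k}$ provides essentially exponential decay uniformly in $n$, producing a contribution vanishing as $\epsilon_0 \to 0$. The left tail is delicate: for $v$ below the cross-over $v_n^\star := n/a_n^{\alpha'}$ (which tends to $0$ by \eqref{eq:m_n}) the tail probability of $\calX$ is no longer small, so one must lean on $v^k/k!$ with $k \ge 1$ combined with Potter's bound $L(n/v) \le C L(n) v^{-\eta}$ to estimate the contribution from $(0, v_n^\star)$ by a quantity of order $n^{1-\eta}L(n)/a_n^{\alpha'(2-\rho-\eta)}$; dividing by $a_n^{-\alpha}n^\beta L(n) \asymp 1/m_n$ and using $\gamma+\rho-1=\beta$, the ratio reduces to $[n^{1-\rho}/(m_n L(n))]^{(1-\beta)/\gamma}$ (up to the $\eta$-correction), which vanishes precisely by \eqref{eq:m_n}. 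The intermediate range $(v_n^\star, \epsilon_0)$ is handled with the asymptotic form of the tail and Potter's bound, giving a contribution of order $\epsilon_0^{1-\beta-\eta}$ that vanishes as $\epsilon_0 \to 0$. The sum over $k$ is routine thanks to the Sibuya normalization. Once the exchange is justified, setting $y = x$ with $v_1 = 0, v_2 = \infty$ and summing over $k$ and $\varepsilon$ gives $m_n \proba(\Omega_n(x)) \to x^{-\alpha}$, and dividing the joint convergence by this yields \eqref{eq:joint limit}.
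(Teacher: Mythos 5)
Your proposal is correct and follows essentially the same route as the paper: condition on $q$, rescale $q=v/n$, use Poisson approximation together with uniform regular/slow variation on a compact $v$-window, control the right tail by Karamata, and split the left tail at the cross-over $n/a_n^{\alpha'}$ (the paper's $nD_{n,\delta,x}$), where Potter's bound and condition \eqref{eq:m_n} make the sub-cross-over contribution negligible. The only difference is organizational — you compute the joint probability with $k$ fixed and sum to get \eqref{eq:Omega_n}, whereas the paper does the full tail analysis for the marginal first and then treats the joint law only on compact windows — but the estimates are the same.
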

The convergence \eqref{eq:joint limit} reads as the weak convergence of the conditional law of the random vector $(\tau_n\topp q,nq, \calX/(a_nq^{1/\alpha'}))$ given $\Omega_n(x)$  to the law of the random vector $(Q_\beta,G(Q_\beta-\beta),x\wt V_\alpha)$.
\begin{Rem}
The convergence to $G(Q_\beta-\beta)$ is not needed in our proof. Nevertheless, it has a probability density in closed form that can be derived as follows. Notice that $G(Q_\beta-\beta)\eqd G(1-\beta)+\summ j1{Q_\beta-1}G_j(1)$, where $G(1-\beta)$ is Gamma with parameter $1-\beta$, $\{G_j(1)\}_{j\in\N}$ are standard exponential random variables and all these random variables and $Q_\beta$ are independent. Recall also the identity that $\esp z^{Q_\beta} = 1-(1-z)^\beta$. Then,  it follows that
\begin{align*}
\esp e^{-\theta G(Q_\beta-\beta)} &  = \esp e^{-\theta G(1-\beta)} \esp \pp{\pp{\esp e^{-\theta G(1)}}^{Q_\beta-1}}\\
& = (1+\theta)^{\beta-1}\esp \pp{(1+\theta)^{-Q_\beta}}(1+\theta) = (1+\theta)^\beta-\theta^\beta.
\end{align*} This is the Laplace transform of the probability density function
\[
\frac{(1-e^{-x})\beta x^{-\beta-1}}{\Gamma(1-\beta)}, \quad x\ge0.
\]
(See \citep[2.2.4.2]{prudnikov92integrals}.)

\end{Rem}

\begin{proof}[Proof of Proposition \ref{prop:1}]
Write
\begin{align}
\proba(\Omega_n(x)) &= \proba\pp{\tau_n\topp q\ne 0,\frac{|\calX|}{a_nq^{1/\alpha'}}>x}
 = \esp \esp \pp{\pp{1-(1-q)^n} \inddd{|\calX|>a_nq^{1/\alpha'}x}\mmid \calX,q}\nonumber\\
& = \esp  \pp{\pp{1-(1-q)^n} \wb F_{|\calX|}\pp{a_n q^{1/\alpha'}x}} = \Psi_{n,\delta}(x)+\Psi_{n,\delta,1}(x)+\Psi_{n,\delta,2}(x),\label{eq:Psi}
\end{align}
with, for $\delta\in(0,1)$,
\begin{align*}
\Psi_{n,\delta}(x) &:= \esp  \pp{\pp{1-(1-q)^n} \wb F_{|\calX|}\pp{a_n q^{1/\alpha'}x};q\in[\delta/n,\delta\inv/n]},\\
\Psi_{n,\delta,1}(x) &:= \esp  \pp{\pp{1-(1-q)^n} \wb F_{|\calX|}\pp{a_n q^{1/\alpha'}x};q>\delta\inv/n},\\
\Psi_{n,\delta,2}(x) &:= \esp  \pp{\pp{1-(1-q)^n} \wb F_{|\calX|}\pp{a_n q^{1/\alpha'}x};q<\delta/n}.\\
\end{align*}
First, for all  $\delta\in(0,1)$,
\begin{align*}
\Psi_{n,\delta}(x)& = \int_{\delta}^{\delta\inv} \pp{\frac qn}^{-\rho}L(n/q)\pp{1-\pp{1-\frac qn }^n}\wb F_{|\calX|}\pp{a_n \pp{\frac qn}^{1/\alpha'}x}\frac{dq}n\\
& \sim C_\calX a_n^{-\alpha}x^{-\alpha} n^{\rho+\gamma-1}L(n) \int_\delta^{\delta\inv}
q^{-\rho}\frac{L(n/q)}{L(n)}\pp{1-\pp{1-\frac qn}^n}q^{-\gamma} dq\\
& \sim \frac\beta{\Gamma(1-\beta)}\frac{ x^{-\alpha}}{m_n}\int_\delta^{\delta\inv}(1-e^{-q})q^{-\gamma-\rho}dq.
\end{align*}
In the second line, we applied
$\wb F_{|\calX|}(a_n(q/n)^{1/\alpha'}x)\sim C_\calX (a_n x)^{-\alpha}(q/n)^{-\gamma}$ uniformly in $q\in[\delta,\delta\inv]$, and for this purpose we shall need $a_n n^{-1/\alpha'}\to \infty$, or equivalently $n^{\beta-\gamma}m_nL(n)\to\infty$, which is our standing condition \eqref{eq:m_n}. Restricted to the same interval we also have $L(n/q)/L(n)\to 1$ uniformly in $q$.  Moreover, we
used the fact that $\limn (1-q/n)^n = e^{-q}$ uniformly over any compact interval $[0,C]$, $C>0$. Recalling that $\beta = \gamma+\rho-1$, we see that
\[
\lim_{\delta\downarrow0}\int_\delta^{\delta\inv}(1-e^{-q})q^{-\gamma-\rho}dq = \int_0^\infty (1-e^{-q})q^{-\beta-1}dq = \frac{\Gamma(1-\beta)}{\beta}.
\]
Thus $\liminfn\proba(\Omega_n(x))\ge \limn \Psi_{n,\delta}(x)$ and this lower bound is tight as it becomes the desired limit in \eqref{eq:Omega_n} as $\delta\downarrow 0$.

 For the upper bound, it remains to show that
 \equh\label{eq:upper 1}
\lim_{\delta\downarrow0}\limsupn m_n\Psi_{n,\delta,i}(x) = 0, \quad i=1,2.
 \eque
 We first show \eqref{eq:upper 1} with $i=1$. This time we use
\[
\Psi_{n,\delta,1}(x)\le  \int_{\delta\inv/n}^1\wb F_{|\calX|}\pp{a_n q^{1/\alpha'}x} q^{-\rho}L(1/q)dq.
\]
 For $n$ large enough, $\wb F_{|\calX|}(a_n q^{1/\alpha'}x)\le (1+\delta) C_\calX(a_n q^{1/\alpha'}x)^{-\alpha}$, for all $q$ in the range of the integral (by uniform convergence of regularly varying function). Therefore, the above is bounded by, for $n$ large enough,
\begin{align*}
(1+\delta)C_\calX\int_{\delta\inv/n}^1 (a_nq^{1/\alpha'}x)^{-\alpha} q^{-\rho}L(1/q)dq &= \frac{(1+\delta)C_\calX}{(a_nx)^{\alpha}}\int_1^{\delta n}q^{\gamma+\rho-2}L(q)dq  \\
& \sim \frac{(1+\delta)C_\calX}{(a_nx)^{\alpha}}
\beta^{-1}
 (\delta n)^\beta L(\delta n)\sim \frac1{m_n}\frac {1+\delta}{\Gamma(1-\beta)}\delta^\beta x^{-\alpha}.
\end{align*}
In the second step we invoked Karamata's theorem. Now \eqref{eq:upper 1} with $i=1$ follows.

Next we show \eqref{eq:upper 1} with $i=2$.
Pick $D_\delta$ be such that for all $x>D_\delta$,
 \equh\label{eq:D_delta}
 \sup_{x>D_\delta}
\frac{\wb F_{|\calX|}(x)}{C_\calX x^{-\alpha}}<1+\delta.
 \eque Write $D_{n,\delta,x} = (D_\delta/(a_n x))^{\alpha'}$. One checks readily that the convergence $\limn nD_{n,\delta,x} = 0$ is the same as $\limn n^{\beta-\gamma}m_n L(n) = \infty$.
 We decompose further the integral area with respect to $q\in [0,\delta/n]$ into $[0,D_{n,\delta,x}]$ and $[
D_{n, \delta, x}
 ,\delta/n]$, and write respectively $\Psi_{n,\delta,2}(x) = \Psi_{n,\delta,2,1}(x)+\Psi_{n,\delta,2,2}(x)$.
 Then,
 \begin{align*}
\Psi_{n,\delta,2,2}(x) & = \int_{nD_{n,\delta,x}}^{\delta} \pp{\frac qn}^{-\rho}L(n/q)\pp{1-\pp{1-\frac qn }^n}\wb F_{|\calX|}\pp{a_n \pp{\frac qn}^{1/\alpha'}x}\frac{dq}n\nonumber\\
& \le (1+\delta) {C_\calX a_n^{-\alpha}x^{-\alpha}}\int_{nD_{n,\delta,x}}^{\delta} \pp{\frac qn}^{-\rho}L(n/q)\pp{1-\pp{1-\frac qn}^n}\pp{\frac qn}^{-\gamma} \frac{dq}n\nonumber\\
& \le (1+\delta)C_\calX x^{-\alpha} \frac{n^{
\beta
}L(n)}{a_n^\alpha}\int_0^\delta\frac{L(n/q)}{L(n)}\pp{1-\pp{1-\frac qn}^n}q^{-\gamma-\rho}dq.
\end{align*}
In the first inequality above we applied \eqref{eq:D_delta}, and the second we used $\beta+1 = \rho+\gamma$ and extend the lower bound of the integral region to zero.
Then, for $n$ large enough, so that $L(n/q)/L(n)<(1+\delta)q^{-\delta}$ for all $q\in(0,\delta)$ (Potter's bound \citep{resnick07heavy}), the above is bounded by, for $\delta\in(0,2-(\gamma+\rho))$,
\[
\frac1{m_n}\frac{ (1+\delta)^2 \beta x^{-\alpha}}{\Gamma(1-\beta)}\int_0^\delta\pp{1-\pp{1-\frac qn}^n}q^{-\gamma-\rho-\delta}dq \sim
\frac1{m_n}\frac{ (1+\delta)^2 \beta x^{-\alpha}}{\Gamma(1-\beta)}\int_0^\delta(1-e^{-q})q^{-\gamma-\rho-\delta}dq.
\]
The right-hand side above has the expression $R_\delta(x)/m_n$ with $\lim_{\delta\downarrow 0}R_\delta(x) = 0$. Next,  for $\delta>0$ small enough,
\begin{align*}
\Psi_{n,\delta,2,1} & \le \int_0^{nD_{n,\delta,x}}\pp{1-\pp{1-\frac qn}^n}\pp{\frac qn}^{-\rho}L(n/q)\frac{dq}n\\
& = L(n) n^{\rho-1}\int_0^{nD_{n,\delta,x}} \pp{1-\pp{1-\frac qn}^n}\frac{L(n/q)}{L(n)}q^{-\rho}dq \\
& \le (1+\delta) L(n) n^{\rho-1}\int_0^{nD_{n,\delta,x}} (1-e^{-q})q^{-\rho-\delta}dq  \sim \frac{1+\delta}{2-\rho-\delta}L(n) n^{\rho-1} \pp{n \pp{\frac {D_\delta}{a_n x}}^{\alpha'}}^{2-\rho-\delta},
\end{align*}
where in the second inequality above we used Potter's bound again, for $n$ large enough.
We want to show the above is asymptotically of a smaller order than $m_n\inv$, or equivalently, dropping the dependence on $\rho,
D_\delta
,x$,
\[
(m_nL(n))^{1-(2-\rho-\delta)/\gamma}n^{1-\delta-\beta(2-\rho-\delta)/\gamma} =  (m_nL(n))^{(\gamma+\rho+\delta-2)/\gamma}n^{(\gamma(1-\delta)-\beta(2-\rho-\delta))/\gamma}  \to0.
\]
Indeed, since $\rho+\gamma\in(1,2)$ and $\rho<1$, one could take  $\delta>0$ small enough (precisely, $\rho+\delta<1, \gamma+\rho+\delta<2$) so that
\[
n^{\frac{\gamma(1-\delta)-\beta(2-\rho-\delta)}{\gamma+\delta-(2-\rho)} }m_nL(n)  = n^{\frac{\beta(2-\rho-\delta)-\gamma(1-\delta)}{2-(\gamma+\rho+\delta)}}m_nL(n) \ge n^{\beta-\gamma}m_nL(n)\to\infty,
\]
where the last step is our standing assumption. Combining the above we have proved \eqref{eq:upper 1} with $i=2$, and hence \eqref{eq:Omega_n}.

Similarly,
one can show that
\begin{align*}
\proba  \pp{nq\in(a,b), \tau_n\topp q = k, \frac{|\calX|}{a_nq^{1/\alpha'}}>x}
&= \int_{a/n}^{b/n}\binom nk q^k(1-q)^{n-k}\wb F_{|\calX|}\pp{a_n q^{1/\alpha'}x}q^{-\rho}L(1/q)dq\\
& \sim\frac1{m_n} \frac{\beta x^{-\alpha}}{\Gamma(1-\beta)}\int_a^b \frac{n^k}{k!}\pp{\frac qn}^{k-\rho}\pp{1-\frac qn}^n(n^{\beta-\gamma}q^\gamma)\inv \frac{dq}n \\
&\sim \frac1{m_n} \frac{\beta x^{-\alpha}}{\Gamma(1-\beta)}\int_a^b\frac{e^{-q}}{k!}q^{k-1-\beta}dq.
\end{align*}
The asymptotic equivalence above follows from the dominated convergence theorem and is much simpler than before. We omit the details.
So, we have
\begin{align*}
\proba\pp{nq\in(a,b), \tau_n\topp q = k, \frac{|\calX|}{a_nq^{1/\alpha'}}>x} & \sim \frac{x^{-\alpha}}{m_n} \frac\beta{\Gamma(1-\beta)}\frac{\Gamma(k-\beta)}{\Gamma(k+1)} \proba(G(k-\beta)\in(a,b))\\ & = \frac{x^{-\alpha}}{m_n}\proba(Q_\beta = k, G(k-\beta)\in(a,b)),
\end{align*}
where $G(k-\beta)$ is a Gamma random variable with parameter $k-\beta$, independent from $Q_\beta$. The desired \eqref{eq:joint limit} then follows.
\end{proof}

\begin{proof}[Proof of Proposition \ref{prop:PPP}]
The second convergence \eqref{eq:PPP'} can be proved  in exactly the same way as \eqref{eq:PP_convergence1}, 
 and the only difference is the alternating signs in both the discrete-time aggregated model and the limit point process. Therefore, we prove only \eqref{eq:PP_convergence1} for the sake of notational simplicity.

We prove by computing the Laplace transform. Let $f(x,y)$ be a bounded and continuous
 function from $\R\times[0,1]$ to $\R_+$ such that $f(x,y) = 0$ for all $|x|\le \kappa$ for some $\kappa>0$.  Then,
\begin{align*}
\esp e^{-\wt\xi_n(f)}& = \esp\exp\pp{-\summ i1{m_n}\summ j1n f\pp{\calX_i/(a_nq_i^{1/\alpha'}),j/n}\eta_{i,j}\topp{q_i}}\\
& = \pp{\esp\exp\pp{-\summ j1nf\pp{\calX/(a_n q^{1/\alpha'}),j/n}\eta_j\topp q}}^{m_n} = \pp{\proba(\Omega_n(\kappa)) \Psi_n(\kappa) + 1-\proba(\Omega_n(\kappa))}^{m_n},
\end{align*}
with
\[
\Psi_n(\kappa):=\esp\pp{\exp\pp{-\summ j1nf\pp{\calX/(a_n q^{1/\alpha'}),j/n}\eta_j\topp q}\mmid\Omega_n(\kappa)}.
\]
Then, by Proposition \ref{prop:1}, writing $\Omega_{n,\ell}(\kappa) := \{|\calX|/(a_n q^{1/\alpha'})>\kappa, \tau_n\topp q = \ell\}$,
\begin{align*}
\Psi_n(\kappa)&=\sif\ell1 \esp\pp{\exp\pp{-\summ j1nf\pp{\calX/(a_n q^{1/\alpha'}),j/n}\eta_j\topp q}\mmid \Omega_{n,\ell}(\kappa)}\proba\pp{\Omega_{n,\ell}(\kappa)\mmid\Omega_n(\kappa)}\\
& \to \sif\ell1 \esp\exp\pp{-\summ j1\ell f(\kappa \wt V_\alpha,U_j)}\proba(Q_\beta = \ell) = \esp\exp\pp{-\summ j1{Q_\beta}f(\kappa \wt V_\alpha,U_j)}=:\Psi(\kappa).
\end{align*}
The convergence above follows from the observation that given $\Omega_{n,\ell}(\kappa)$, $\{\eta_{j}\topp{q}\}_{j=1,\dots,n}$ is exchangeable; from this we derive that since $\summ j1n \eta_j\topp q = \ell$, the law of $\{j/n\}_{j=1,\dots,n, \eta_j\topp q = 1}$ follows the law of $\ell$-sampling without replacement from $\{1,\dots,n\}$, which has the limit as $\{U_j\}_{j=1,\dots,\ell}$, and their independence from $\wt V_\alpha$ follows from the conditional independence of $\{\eta_j\topp q\}_{j=1,\dots,n}$ from $\calX/(a_n q^{1/\alpha'})$.
Therefore, it follows that, recalling $\proba(\Omega_n(\kappa))\sim \kappa^{-\alpha}/m_n$ in \eqref{eq:Omega_n},
\begin{align*}
 \esp e^{-\wt\xi_n(f)} & = \pp{1-\proba(\Omega_n(\kappa))(1-\Psi_n(\kappa))}^{m_n} \\
 & \to \exp\pp{-\limn m_n\proba(\Omega_n(\kappa))(1-\Psi(\kappa))} = \exp\pp{- \kappa^{-\alpha}(1-\Psi(\kappa))}.
\end{align*}
At the same time, let $N_\kappa$ denote a Poisson random variable with intensity $\kappa^{-\alpha}$. Then,
\[
\esp e^{-\xi(f)}  = \esp\exp\pp{-\summ i1{N_\kappa}\summ j1{Q_{\beta,i}}f(\wt V_{\alpha,i},U_{i,j})} = \esp\pp{\Psi(\kappa)^{N_\kappa}} = e^{-\kappa^{-\alpha}(1-\Psi(\kappa))},
\]
where $(\wt V_{\alpha,i},Q_{\beta,i},\{U_{i,j}\}_{j\in\N}), i\in\N$ are i.i.d.~copies of $(\wt V_\alpha, Q_\beta,\{U_j\}_{j\in\N})$.
This completes the proof.
\end{proof}

\subsection{Proof of Theorem \ref{thm:PPP}}\label{sec:proof PPP}
Set
\[
\wt X_{n,j} :=\frac1{a_n}\summ i1{m_n}\frac{\calX_i\eta_{i,j}\topp{q_i}}{q_i^{1/\alpha'}}.
\]
Recall the notations around \eqref{eq:notations}. Recall also that $\rho+\gamma = \beta+1\in(1,2)$, $\gamma = \alpha/\alpha'>0$ and $\rho<1$.
Introduce  for $\epsilon>0$,
\[
\wt X_{n,j,\epsilon} : = \frac1{a_n}\summ i1{m_n}\frac{\calX_i\eta_{i,j}\topp{q_i}}{q_i^{1/\alpha'}}\inddd{|\calX_i|>a_nq_i^{1/\alpha'}\epsilon}.
\]
The idea of the proof is to compare
\equh\label{eq:xi_n_epsilon}
\xi_{n,\epsilon}:=\summ j1n \ddelta{\wt X_{n,j,\epsilon},j/n} \qmand \wt \xi_{n,\epsilon}:=\sum_{\substack{i=1,\dots,m_n\\ |\calX_i|>a_nq_i^{1/\alpha'}\epsilon}}\summ j1n \eta_{i,j}\topp{q_i}\ddelta{\calX_i/(a_nq_i^{1/\alpha'}),j/n}.
\eque
We have seen in Proposition
\ref{prop:PPP}
that the latter above converges to the desired point process $\xi$ in
\eqref{eq:xi}
 restricted to
$([-\infty,-\epsilon]\cup[\epsilon,\infty])\times[0,1]$.
 Introduce also
\[
\what C_{n,\epsilon}(i) := \ccbb{j=1,\dots,n: \frac{|\calX_i|}{q_i^{1/\alpha'}}\eta_{i,j}\topp{q_i}>a_n\epsilon} \qmand \what C_{n,\epsilon} :=\bigcup_{i=1}^{m_n} \what C_{n,\epsilon}(i),
\]
and furthermore
\equh\label{eq:epsilon_n}
\epsilon_n := n^{-\beta_0/\alpha}, n\in\N,
\eque
for any $\beta_0\in(0,\beta)
$. We begin by analyzing $\wt X_{n,j} - \wt X_{n,j,\epsilon}$, which is the same as $Z_{n,j,\epsilon,\epsilon_n}+W_{n,j,\epsilon_n}$ with
\begin{align*}
Z_{n,j,\epsilon,\epsilon_n}&:= \summ i1{m_n}\frac{\calX_i}{a_nq_i^{1/\alpha'}}\eta_{i,j}\topp{q_i}\inddd{|\calX_i|/ \pp{a_n q_i^{1/\alpha'}}\in[\epsilon_n,\epsilon]},\\
W_{n,j,\epsilon_n}&:= \summ i1{m_n}\frac{\calX_i}{a_nq_i^{1/\alpha'}}\eta_{i,j}\topp{q_i}\inddd{|\calX_i|/\pp{ a_n q_i^{1/\alpha'}}<\epsilon_n}, \quad j=1,\dots,n.
\end{align*}
\begin{Lem}\label{lem:Z}
We have, for $r  \equiv r_{\beta,\beta_0}:= \floor{1/(\beta-\beta_0)}+1$,
\equh\label{eq:Z}
\limn \proba\pp{\max_{j=1,\dots,n}|Z_{n,j,\epsilon,\epsilon_n}|\ge r\epsilon} = 0, \mfa \epsilon>0,
\eque
and
\equh\label{eq:Z1}
\limn\proba\pp{\max_{j\in\what C_{n,\epsilon}}|Z_{n,j,\epsilon,\epsilon_n}|
>
0} = 0.
\eque
\end{Lem}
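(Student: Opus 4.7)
The plan rests on a sharp one-point estimate for the \emph{middle-range} probability
\[
p_n := \proba\pp{|\calX|/(a_nq^{1/\alpha'}) > \epsilon_n,\ \eta_1\topp q = 1} = \esp\bb{q\,\wb F_{|\calX|}\pp{a_nq^{1/\alpha'}\epsilon_n}}.
\]
The approach is to apply the same $q = u/n$ substitution and Karamata-type analysis used for $\Psi_{n,\delta}$ in the proof of Proposition \ref{prop:1}. Since $\rho < 1$ gives $\gamma > \beta > \beta_0$, one checks from \eqref{eq:m_n} that $(a_n\epsilon_n)^\alpha \asymp n^{\beta-\beta_0}m_nL(n) \to \infty$, so the tail asymptotic $\wb F_{|\calX|}(x) \sim C_\calX x^{-\alpha}$ applies in the dominant regime. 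A computation parallel to the one for $\Psi_{n,\delta}$ then yields
\[
p_n \sim C_1\,\epsilon_n^{-\alpha}/(n^\beta m_nL(n)) = C_1\,n^{\beta_0-\beta}/(m_nL(n)),
\]
for some constant $C_1 > 0$; in particular $m_n p_n \to 0$, since $\beta_0 < \beta$.

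To prove \eqref{eq:Z}, note that each nonzero summand of $Z_{n,j,\epsilon,\epsilon_n}$ has absolute value at most $\epsilon$, so $\{|Z_{n,j,\epsilon,\epsilon_n}| \ge r\epsilon\} \subseteq \{K_{n,j} \ge r\}$, where $K_{n,j}$ counts the indices $i$ with $|\calX_i|/(a_nq_i^{1/\alpha'}) \in [\epsilon_n,\epsilon]$ and $\eta_{i,j}\topp{q_i}=1$. Because the triples indexed by $i$ are i.i.d., each such contribution indicator is a Bernoulli with success probability bounded by $p_n$, independently across $i$. A union bound over $j$ and over $r$-subsets of $\{1,\ldots,m_n\}$ gives
\[
\proba\pp{\max_{j=1,\ldots,n}K_{n,j} \ge r} \le n\binom{m_n}{r}p_n^r \le \frac{n(m_n p_n)^r}{r!} \asymp \frac{n^{1-r(\beta-\beta_0)}}{L(n)^r}.
\]
The choice $r = \floor{1/(\beta-\beta_0)}+1$ forces $r(\beta-\beta_0) > 1$, so the right-hand side tends to zero, proving \eqref{eq:Z}.

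For \eqref{eq:Z1}, the event forces some $j$ to be simultaneously in $\what C_{n,\epsilon}$ (hit by some ``large'' copy $i$ with $|\calX_i|/(a_nq_i^{1/\alpha'}) > \epsilon$ and $\eta_{i,j}\topp{q_i}=1$) and to be hit by a distinct ``middle'' copy $i'$ (with $|\calX_{i'}|/(a_nq_{i'}^{1/\alpha'}) \in [\epsilon_n,\epsilon]$ and $\eta_{i',j}\topp{q_{i'}}=1$). I condition on the $\sigma$-field generated by the large copies, which fully determines $J := \what C_{n,\epsilon}$; the middle copies remain conditionally i.i.d. Given $q_{i'}$ and the middle-range event, $\{\eta_{i',j}\topp{q_{i'}}\}_{j=1}^n$ are i.i.d.\ Bernoulli, so the chance that $i'$ hits some $j\in J$ is at most $|J|q_{i'}$. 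Union-bounding over the $m_n$ possible middle indices and taking expectation, for any $M > 0$ we obtain
\[
\proba\pp{\max_{j \in \what C_{n,\epsilon}}|Z_{n,j,\epsilon,\epsilon_n}| > 0} \le \esp\bb{m_n|J|p_n;\,|J|\le M} + \proba(|J| > M) \le Mm_np_n + \proba(|J| > M).
\]
The first term vanishes because $m_np_n \to 0$. For the second, Proposition \ref{prop:PPP} yields weak convergence of $\wt\xi_n(\{|x|>\epsilon\}\times[0,1])$---an upper bound on $|J|$---to the a.s.\ finite random variable $\sum_\ell Q_{\beta,\ell}\inddd{\Gamma_\ell^{-1/\alpha}>\epsilon}$; hence $\{|J|\}_n$ is tight, and sending $M \to \infty$ finishes the argument.

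The main obstacle is the first step: pinning down $p_n$ with the sharp order $n^{\beta_0-\beta}/(m_nL(n))$ despite the \emph{moving} threshold $\epsilon_n = n^{-\beta_0/\alpha}\to 0$. This requires essentially the same three-zone decomposition $\Psi_{n,\delta},\Psi_{n,\delta,1},\Psi_{n,\delta,2}$ used in the proof of Proposition \ref{prop:1}, now applied uniformly as $\epsilon_n$ decreases; Potter's bounds and Karamata's theorem handle the $L$-factors throughout. Once this estimate is in hand, both \eqref{eq:Z} and \eqref{eq:Z1} follow by the elementary counting bounds outlined above.
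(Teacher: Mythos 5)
Your proposal is correct and follows essentially the same route as the paper: the one-point estimate $\proba(|\calX|/(a_nq^{1/\alpha'})>\epsilon_n,\,\eta_1^{(q)}=1)\le C(a_n\epsilon_n)^{-\alpha}$ (the paper only needs this upper bound, obtained via the same two-zone split at a threshold $d_n$, rather than your sharp asymptotic), then the union bound $n\binom{m_n}{r}p_n^r$ for \eqref{eq:Z}, and the split $Km_np_n+\proba(|\what C_{n,\epsilon}|>K)$ with tightness from Proposition \ref{prop:PPP} for \eqref{eq:Z1}. No substantive differences.
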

\begin{Lem}\label{lem:W}
We have
\equh\label{eq:W}
\limn \proba\pp{\max_{j=1,\dots,n}|W_{n,j,\epsilon_n}|>\lambda} = 0, \mfa \lambda>0.
\eque
\end{Lem}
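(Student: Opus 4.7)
My plan is to apply the union bound
$$\proba\pp{\max_{j=1,\ldots,n}|W_{n,j,\epsilon_n}|>\lambda} \le n\cdot \proba\pp{|W_{n,1,\epsilon_n}|>\lambda}$$
and to show that $\proba(|W_{n,1,\epsilon_n}|>\lambda)=o(1/n)$ via a moment estimate. Write $W_{n,1,\epsilon_n}=\sum_{i=1}^{m_n}Y_i$ with
$Y_i:=\pp{\calX_i\eta_{i,1}\topp{q_i}/(a_n q_i^{1/\alpha'})}\ind(|\calX_i|<a_nq_i^{1/\alpha'}\epsilon_n)$.
Since $\calX_i$ is symmetric and independent of $(q_i,\eta_{i,1}\topp{q_i})$, the $Y_i$ are i.i.d., mean-zero, and bounded by $\epsilon_n$ in absolute value.

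The key ingredient is a moment bound on a single summand. Conditioning first on $(q_i,\calX_i)$ and using $\esp[\eta_{i,1}\topp{q_i}\mid q_i]=q_i$, then applying Karamata's theorem when $\alpha<2$ (so that $\esp[|\calX|^p\ind(|\calX|<M)]\sim (\alpha/(p-\alpha))C_\calX M^{p-\alpha}$ for $p>\alpha$), or the simple bound $|\calX|^p\ind(|\calX|<M)\le M^{p-2}\calX^2$ when $\alpha=2$ (in view of $\esp\calX^2<\infty$), yields, for $p\ge 2$ (with $p>\alpha$ when $\alpha<2$),
$$\esp|Y_i|^p \le C_p\,\epsilon_n^{p-\alpha}a_n^{-\alpha}\esp[q^{1-\gamma}].$$
The moment $\esp q^{1-\gamma}=\int_0^1 q^{-\beta}L(1/q)\,dq$ is finite because $\beta\in(0,1)$, and the variance is recovered as the case $p=2$.

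Rosenthal's inequality (Marcinkiewicz--Zygmund suffices in the symmetric case) then gives
$$\esp|W_{n,1,\epsilon_n}|^p \le C_p\pp{m_n\esp|Y_1|^p + (m_n\esp Y_1^2)^{p/2}}.$$
Substituting the moment bound together with the definition $a_n^\alpha\sim \mathrm{const}\cdot n^\beta m_n L(n)$ and $\epsilon_n=n^{-\beta_0/\alpha}$, these two summands become of order $n^{-\beta-(p-\alpha)\beta_0/\alpha}/L(n)$ and $n^{-p(\beta+(2-\alpha)\beta_0/\alpha)/2}/L(n)^{p/2}$ respectively. Markov's inequality combined with the union bound then gives
$$\proba\pp{\max_{j=1,\ldots,n}|W_{n,j,\epsilon_n}|>\lambda}\le \frac{C_{p,\lambda}\,n^{1-\beta-(p-\alpha)\beta_0/\alpha}}{L(n)}+\frac{C_{p,\lambda}\,n^{1-p(\beta+(2-\alpha)\beta_0/\alpha)/2}}{L(n)^{p/2}}.$$
Both exponents of $n$ become strictly negative for $p$ chosen sufficiently large (depending on $\alpha,\beta,\beta_0$), precisely because $\beta>0$ and $\beta_0>0$. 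The only technical point to watch is this choice of $p$ large enough to beat the factor of $n$ in the union bound; this is possible because the truncation scale $\epsilon_n$ decays polynomially, which makes each individual summand $Y_i$ small enough to compensate for the loss incurred in passing from $\esp|W|^p$ to a pointwise tail bound over $n$ indices.
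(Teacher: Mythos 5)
Your strategy --- union bound followed by a high-moment (Rosenthal) estimate on $W_{n,1,\epsilon_n}$ --- is genuinely different from the paper's, which after the same union bound applies Bernstein's inequality to the bounded, centered summands $V_{n,i,\epsilon_n}$ and therefore only ever needs the variance $w_n=m_n\esp V_{n,1,\epsilon_n}^2$: since $|V_{n,i,\epsilon_n}|\le\epsilon_n$ and both $w_n$ and $\epsilon_n$ decay polynomially, the bound $2n\exp\pp{-\lambda^2/(2(w_n+\epsilon_n\lambda/3))}$ kills the factor $n$ with no need to tune a moment order $p$ against $(\alpha,\beta,\beta_0)$. For $\alpha\in(0,2)$ your route is sound: for every $p>\alpha$ the truncated-moment bound $\esp\sbb{|\calX|^p\inddd{|\calX|<M}}\le C_pM^{p-\alpha}$ holds uniformly in $M>0$ (for small $M$ use the trivial bound $M^p\le M^{p-\alpha}$ --- worth stating, since $M=a_nq^{1/\alpha'}\epsilon_n$ is not bounded away from $0$ as $q$ ranges over $(0,1)$; the paper handles the analogous issue by splitting the $q$-integral at a threshold $d_n$), the integral $\esp q^{1-\gamma}=\int_0^1q^{-\beta}L(1/q)\,dq$ is indeed finite because $\beta<1$, and your two exponents are correct and negative for large $p$.

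The gap is at $\alpha\ge2$, which Theorem \ref{thm:PPP}, and hence this lemma, must cover. You apply the formula $\esp|Y_i|^p\le C_p\epsilon_n^{p-\alpha}a_n^{-\alpha}\esp q^{1-\gamma}$ at $p=2$ for the Rosenthal variance term, but Karamata's theorem gives $\esp\sbb{|\calX|^p\inddd{|\calX|<M}}\sim cM^{p-\alpha}$ only when $p>\alpha$; for $p=2<\alpha$ the truncated second moment converges to the finite constant $\esp\calX^2$, so the correct order is $m_n\esp Y_1^2\asymp m_n/a_n^2$, not $m_n\epsilon_n^{2-\alpha}a_n^{-\alpha}$ (the latter is strictly smaller, since their ratio is $(a_n\epsilon_n)^{2-\alpha}\to0$). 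With the correct variance, $m_n/a_n^2=Cm_n^{1-2/\alpha}/(n^\beta L(n))^{2/\alpha}$ decays polynomially only under the additional hypothesis $m_n\le Cn^\kappa$ with $\kappa<2\beta/(\alpha-2)$ --- an assumption of Theorem \ref{thm:PPP} that your proof never invokes and that is genuinely needed here (if $m_n$ grows super-polynomially, the variance of $W_{n,1,\epsilon_n}$ blows up and the maximum does not vanish). Likewise, at $\alpha=2$ the standing assumption is \eqref{eq:varepsilon_RV}, under which $\esp\calX^2=\infty$, so the justification ``in view of $\esp\calX^2<\infty$'' is not available; one must instead use $\esp\sbb{\calX^2\inddd{|\calX|<M}}\le C(1+(\log M)_+)$, which costs a harmless factor $\log(a_n\epsilon_n)$ provided $m_n$ grows at most polynomially. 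Once the $p=2$ estimate is repaired in these two cases (keeping $p>\alpha$ for the other Rosenthal term), your argument does go through.
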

\begin{proof}[Proof of Lemma \ref{lem:Z}]
We shall need
\equh\label{eq:exceedance}
\proba\pp{\frac{|\calX|}{a_n q^{1/\alpha'}}>\epsilon_n, \eta_1\topp q=1} \le C (a_n\epsilon_n)^{-\alpha}, \mfa n\in\N.
\eque
Here and below, we let $C$ denote a positive constant that may change from line to line.
To see the above,
we write the probability on the left-hand side of \eqref{eq:exceedance} as $\int_0^1q^{1-\rho}L(1/q)\wb F_{|\calX|}(a_n q^{1/\alpha'}\epsilon_n)dq$, 
and let $d_n$ be such that
\equh\label{eq:d_n}
d_n\downarrow 0, \quad
d_n(a_n\epsilon_n)^{\alpha'}\to\infty \qmand  d_n^{2-\rho}L(1/d_n)(a_n\epsilon_n)^{\alpha}\to 0.
\eque
(One readily checks that such a sequence exists since
$(a_n\epsilon_n)^{-\alpha'}\ll (a_n\epsilon_n)^{-\alpha/(2-\rho)}$,
which is equivalent to $\alpha'>\alpha/(2-\rho)$, or $2-\rho-\gamma>0$.)
  Decompose the integral into $\int_0^{d_n}$ and $\int_{d_n}^1$, we bound the first by
$\int_0^{d_n} q^{1-\rho}L(1/q)dq \sim (2-\rho)\inv d_n^{2-\rho}L(1/d_n)$,
and the second by
\[
C\int_{d_n}^1 q^{1-\rho}L(1/q) (a_n \epsilon_n q^{1/\alpha'})^{-\alpha}dq 
=
 C(a_n\epsilon_n)^{-\alpha}\int_{d_n}^1 q^{1-\rho-\gamma}L(1/q)dq\sim C(a_n\epsilon_n)^{-\alpha}.
\]
Note that in the above, we need
$d_n(a_n\epsilon_n)^{\alpha'}\to\infty$ (the second condition in \eqref{eq:d_n}), and the third condition in \eqref{eq:d_n} now implies that the integral over $[d_n,1]$ is dominant.
We have proved \eqref{eq:exceedance}.

Now, to prove \eqref{eq:Z}, it suffices to prove
\[
\limn \proba\pp{\max_{j=1,\dots,n}\summ i1{m_n}\eta_{i,j}\topp{q_i}\inddd{|\calX_i|>a_nq_i^{1/\alpha'}\epsilon_n}\ge r} = 0.
\]
In words, with probability going to zero, at some location $j$ there are more than  $r$ different indices $i$ such that $|\calX_i|$ is large and also $\eta_{i,j}\topp{q_i} = 1$ (in the complement of this event, the largest possible value of 
$|Z_{n,j,\epsilon,\epsilon_n}|$
 is
$(r-1)\epsilon_n$, for all $j$).  An upper bound of the probability of interest above is then
\[
n\binom{m_n}r
\left(\proba\pp{\frac{|\calX|}{a_n q^{1/\alpha'}}>\epsilon_n, \eta_1\topp q=1}\right)^r
 \le C nm_n^r (a_n\epsilon_n)^{-\alpha r}.
\]
We see that our choices of $\beta_0\in(0,\beta)$ and $r$ entail that the right-hand side above decays to zero.  We have thus proved \eqref{eq:Z}.

Next, we prove \eqref{eq:Z1}. By a similar argument as above, we have
\[
\proba\pp{\max_{j\in\what C_{n,\epsilon}}\summ i1{m_n}\eta_{i,j}\topp{q_i}\inddd{|\calX_i|>a_nq_i^{1/\alpha'}\epsilon_n}>0}
\le \proba\pp{|\what C_{n,\epsilon}|>K} + K m_n\proba\pp{\frac{|\calX|}{a_n q^{1/\alpha'}}>\epsilon_n, \eta_1\topp q=1}.
\]
The second term on the right-hand side above is bounded from above by $CKm_n(a_n\epsilon_n)^{-\alpha}\to 0$,
for all $K>0$ fixed. So we have
\[
\limsupn\proba\pp{\max_{j\in\what C_{n,\epsilon}}\summ i1{m_n}\eta_{i,j}\topp{q_i}\inddd{|\calX_i|>a_nq_i^{1/\alpha'}\epsilon_n}>0}\le \limsupn \proba\pp{|\what C_{n,\epsilon}|>K},
\]
where the right-hand side tends to zero by taking $K\to\infty$.
Indeed,
first notice that
by Proposition \ref{prop:PPP},
$|\what C_{n,\epsilon}| \le \summ i1{m_n}|\what C_{n,\epsilon}(i)|\weakto \summ i1{N_\epsilon}Q_{\beta,i}$,
where $N_\epsilon$ is a Poisson random variable with parameter $\epsilon^{-\alpha}$, and $\{Q_{\beta,i}\}_{i\in\N}$ are i.i.d.~random variables independent from $N_\epsilon$.  Therefore,
\[
\limsup_{n\to\infty}
 \proba\pp{|\what C_{n,\epsilon}|>K }  \le \proba\pp{\summ i1{N_\epsilon} Q_{\beta,i}>K}.
\]
(This
 inequality is actually an equality, as later on we shall see that $|\what C_{n,\epsilon}| = \summ i1{m_n}|\what C_{n,\epsilon}(i)|$ with probability tending to one; i.e., $\limn\proba(\calE_{n,1,\epsilon}^c) = 0$ in the proof of Theorem \ref{thm:PPP}.)
It thus follows that
\equh\label{eq:what C_n}
\lim_{K\to\infty}\limn \proba\pp{|\what C_{n,\epsilon}|>K }  = 0, \mfa \epsilon>0.
\eque
We have proved \eqref{eq:Z1}.
\end{proof}

\begin{proof}[Proof of Lemma \ref{lem:W}]
Now we prove 
\eqref{eq:W}.
Write
\[
W_{n,1,\epsilon_n} = \summ i1{m_n}V_{n,i,\epsilon_n} \qmwith V_{n,i,\epsilon_n}:=\frac{\calX_i}{a_n q_i^{1/\alpha'}}\eta_{i,1}\topp{q_i}\inddd{|\calX_i|/\pp{a_n q^{1/\alpha'}_i}<\epsilon_n}.
\]
Observe that $|V_{n,i,\epsilon_n}|\le \epsilon_n$ and write $w_n := m_n\esp V_{n,1,\epsilon_n}^2$.
By union bound first and then the Bernstein inequality  \citep[(2.10)]{boucheron13concentration} , we have
\equh\label{eq:Bernstein}
\proba\pp{\max_{j=1,\dots,n}|W_{n,j,\epsilon_n}|>\lambda}\le n \proba(|W_{n,1,\epsilon_n}|>\lambda)
 \le 2n\exp\pp{-\frac{\lambda^2}{2(w_n + \epsilon_n \lambda/3)}},
\eque
for all $\lambda>0$ and $n\in\N$.
We shall compute at the end
\equh\label{eq:w_n}
w_n\le \begin{cases}
\displaystyle C \frac{m_n}{a_n^\alpha}\epsilon_n^{2-\alpha}, & \mbox{ if } \alpha\in(0,2),\\\\
\displaystyle C \frac{m_n}{a_n^2}(1+\log(a_n\epsilon_n)_+), & \mbox{ if } \alpha=2,\\\\
\displaystyle C \frac{m_n}{a_n^2}, & \mbox{ if } \alpha>2.
\end{cases}
\eque
Then, by \eqref{eq:Bernstein} and our choice of $\epsilon_n$, it suffices to check that $w_n\to 0$ at a polynomial rate. This is true for $\alpha\in(0,2)$, and for $\alpha\ge 2$ an additional assumption on $m_n$ is needed.
Indeed, with $\alpha=2$ the $\log (a_n\epsilon_n)$ might be problematic if $m_n$ grows at an exponential rate, while any polynomial growth of $m_n$ would cause no problem; and with $\alpha>2$,
\[
\frac{m_n}{a_n^2}  = C\frac{a_n^{\alpha-2}}{n^\beta L(n)}
 = C\frac{m_n^{1-2/\alpha}}{(n^\beta L(n))^{2/\alpha}}\to 0
\]
at a polynomial rate is guaranteed by $m_n\le Cn^\kappa$ for any $\kappa<2\beta/(\alpha-2)$.
Therefore, the desired \eqref{eq:W} follows from \eqref{eq:Bernstein} and  \eqref{eq:w_n}.

It remains to prove \eqref{eq:w_n}. We have
\begin{align}
w_n&= m_n \esp\pp{\pp{\frac\calX{a_n q^{1/\alpha'}}}^2\eta\topp q\inddd{|\calX|<a_n\epsilon_n q^{1/\alpha'}}}\nonumber\\
& = \frac{m_n}{a_n^2} \int_0^1 q^{1-\rho-2/\alpha'}L(1/q) \esp \pp{\calX^2\inddd{|\calX|<a_n\epsilon_n q^{1/\alpha'}}}dq.\label{eq:w_n integral}
\end{align}
Now the discussions shall depend on the values of $\alpha>0$ in three cases.

(i) Assume $\alpha<2$. Introduce a parameter $d_n = (a_n\epsilon_n)^{-\alpha'}\downarrow 0$ (we no longer need the same constraints on $d_n$ as in \eqref{eq:d_n}
 before as we only need an upper bound now). Again decompose the integral in \eqref{eq:w_n integral} into two parts 
on $\int_0^{d_n}$ and $\int_{d_n}^1$, respectively.
 Applying Karamata's theorem on the expectation,
the second part (with the factor $m_n/a_n^2$ in front) can be bounded by,
\[
C\frac{m_n}{a_n^2}\int_{d_n}^1 q^{1-\rho-2/\alpha'}L(1/q) (a_n\epsilon_nq^{1/\alpha'})^{2-\alpha}dq
= C\frac{m_n \epsilon_n^{2-\alpha}}{a_n^\alpha}\int_{d_n}^1 q^{1-\rho-\gamma}L(1/q)dq.
\]
The first part can be bounded by
\begin{align}
m_n\epsilon_n^2\int_0^{d_n}q^{1-\rho}L(1/q) dq & \le Cm_n\epsilon_n^2d_n^{2-\rho}L(1/d_n)\nonumber\\
&=C\frac{m_n\epsilon_n^{2-\alpha}}{a_n^\alpha}(a_n\epsilon_n)^{\alpha-\alpha'(2-\rho)}L(1/d_n)
= o\pp{\frac{m_n\epsilon_n^{2-\alpha}}{a_n^\alpha}}.\label{eq:0,d_n}
\end{align}
(Note that $\alpha-\alpha'(2-\rho) = \alpha'(\gamma+\rho-2)<0$.)

(ii)
 If $\alpha>2$, then
\[
w_n\le \esp\calX^2 \frac{m_n}{a_n^2}\int_0^1q^{1-\rho-2/\alpha'}L(1/q)dq \le C \frac{m_n}{a_n^2}.
\]

(iii) If $\alpha=2$, under the assumption $\proba(|\calX|>x)\sim C_\calX x^{-2}$, there exists a constant $C$ 
 such that
\equh\label{eq:gamma_0}
\esp\pp{|\calX|^2\inddd{|\calX|<x}}\le 1+C(\log x)_+, \mfa x>0.
\eque
Then, \eqref{eq:w_n integral} with the integrals restricted to $[0,d_n]$ (we use the same bound as in \eqref{eq:0,d_n}) and $[d_n,1]$ (we use the bound \eqref{eq:gamma_0} above) are bounded from above by respectively
\[
C\frac{m_n}{a_n^2}(a_n\epsilon_n)^{2-\alpha'(2-\rho)}L(1/d_n) \qmand C\frac{m_n}{a_n^2}(1+(\log(a_n\epsilon_n))_+).
\]
Again the part over $[d_n,1]$ is dominant.
We have thus proved \eqref{eq:w_n}.
\end{proof}
\begin{proof}[Proof of Theorem \ref{thm:PPP}]
Consider a Lipschitz continuous and bounded
non-negative
 function $f(x,y)$ such that $f(x,y) = 0$ for all $x\in[-\kappa,\kappa]$, with Lipschitz constant $C_f$.
Let $r = r_{\beta,\beta_0} = \floor{1/(\beta-\beta_0)}+1$ as in Lemma \ref{lem:Z}, and $\epsilon\in(0,\kappa/(r+1))$.  Introduce
\begin{align*}
\calE_{n,1,\epsilon}&:=\ccbb{\ccbb{\what C_{n,
\epsilon}
(i)
}_{i=1,\dots,m_n} \mbox{ are all disjoint }},\\
\calE_{n,2,\epsilon}&:=\ccbb{\max_{j=1,\dots,n}\abs{\wt X_{n,j}-\wt X_{n,j,\epsilon}}\le (r+1)\epsilon},\\
\calE_{n,3,\epsilon,K}&:=\ccbb{|\what C_{n,\epsilon}|\le K},\\
\calE_{n,4,\epsilon,\lambda}&:=\ccbb{\max_{j\in\what C_{n,\epsilon}}\abs{\wt X_{n,j}-\wt X_{n,j,\epsilon}}\le \lambda},
\end{align*}
and $\calE_{n,\epsilon,K,\lambda} := \calE_{n,1,\epsilon}\cap \calE_{n,2,\epsilon} \cap \calE_{n,3,\epsilon,K}\cap \calE_{n,4,\epsilon,\lambda}$. Recall $\xi_{n,\epsilon}$ and $\wt \xi_{n,\epsilon}$ in \eqref{eq:xi_n_epsilon}.
The key relation in the approximation  is for all $K>0$,
\equh\label{eq:sandwich}
  e^{-\wt\xi_{n,\epsilon}(f)}e^{-\lambda KC_f} \le e^{-\xi_{n}(f)}  \le  e^{-\wt\xi_{n,\epsilon}(f)}e^{\lambda KC_f}, \mbox{ restricted to $\calE_{n,\epsilon,K,\lambda}$}.
\eque
We prove the upper-bound part only as  the lower-bound part is similar.
Restricted to $\calE_{n,\epsilon,K,\lambda}$, we have
\begin{align*}
e^{-\xi_n(f)} & = \exp\pp{-\sum_{j\in\what C_{n,\epsilon}}f\pp{\wt X_{n,j},j/n}} \le \exp\pp{-\sum_{j\in\what C_{n,\epsilon}}f\pp{\wt X_{n,j,\epsilon},j/n}}e^{\lambda KC_f} \\
&= e^{-\xi_{n,\epsilon}(f)}e^{\lambda KC_f} =e^{-\wt \xi_{n,\epsilon}(f)}e^{\lambda KC_f},
\end{align*}
where we used the restrictions to $\calE_{n,2,\epsilon}$ in the first equality (since for $j\notin\what C_{n,\epsilon}$, $\wt X_{n,j} = \wt X_{n,j}-\wt X_{n,j,\epsilon}$, which is small when restricted to $\calE_{n,2,\epsilon}$), to $\calE_{n,3,\epsilon,K}\cap\calE_{n,4,\epsilon,\lambda}$ in the first inequality by Lipschitz continuity,  and to $\calE_{n,1,\epsilon}$ in the third equality, respectively. (In the third equality, we used the observation that  restricted to the event $\calE_{n,1,\epsilon}$, $\xi_{n,\epsilon} = \wt\xi_{n,\epsilon}$.
Indeed, on the event $\calE_{n,1,\epsilon}$ if $\wt X_{n,j,\epsilon}\ne 0$ for some $j$, then necessarily $\wt X_{n,j,\epsilon} = \calX_i\eta_{i,j}\topp{q_i}/(a_nq_i^{1/\alpha'})$ for a unique index $i \in\{1,\dots,m_n\}$ and for all other $i'$, $| \calX_{i'}|\eta_{i',j}\topp{q_{i'}}
\leq
a_nq_{i'}^{1/\alpha'}\epsilon$.)

Recall our choice of $\epsilon_n$ in \eqref{eq:epsilon_n}. Then, the upper bound in \eqref{eq:sandwich} becomes
\begin{align*}
\limsupn \esp e^{-\xi_n(f)} &
\leq \limsupn \esp \pp{e^{-\wt\xi_{n,\epsilon}(f)+\lambda KC_f}\ind_{\calE_{n,\epsilon,K,\lambda}}} + \limsupn \esp\pp{e^{-\xi_n(f)}\ind_{\calE_{n,\epsilon,K,\lambda}^c}}\\
& \le  \limsupn \esp \pp{e^{-\wt\xi_{n,\epsilon}(f)+\lambda KC_f}}
 + \limsupn \proba\pp{\calE_{n,\epsilon,K,\lambda}^c}\\
& =
\esp e^{-\xi(f)}\cdot e^{\lambda KC_f}+ \limsupn \proba\pp{\calE_{n,\epsilon,K,\lambda}^c}.
\end{align*}
In the last step we used 
first  $\wt\xi_{n,\epsilon}(f) = \wt\xi_n(f)$ thanks to the assumption that $f(x,y) = 0$ for $x\in[-\kappa,\kappa]$, and then $\limn\esp e^{-\wt \xi_n(f)} = \esp e^{-\xi (f)}$ by Proposition 
\ref{prop:PPP}.
A similar argument yields the lower bound
\begin{align*}
\liminfn \esp e^{-\xi_n(f)} & \geq \liminfn \esp \pp{e^{-\wt\xi_{n,\epsilon}(f)-\lambda KC_f}\ind_{\calE_{n,\epsilon,K,\lambda}}}\\
&\geq \liminfn \esp \pp{e^{-\wt\xi_{n,\epsilon}(f)-\lambda KC_f}}-\limsupn \proba\pp{\calE_{n,\epsilon,K,\lambda}^c}\\
&=\esp e^{-\xi(f)}\cdot e^{-\lambda KC_f}- \limsupn \proba\pp{\calE_{n,\epsilon,K,\lambda}^c}.
\end{align*}
Combining 
these two bounds gives
\begin{align*}
\esp e^{-\xi(f)}e^{-\lambda KC_f}-\limsupn\proba\pp{\calE_{n,\epsilon,K,\lambda}^c}& \le  \liminfn \esp e^{-\xi_n(f)}\\
&\le\limsupn \esp e^{-\xi_n(f)}\le \esp e^{-\xi(f)}e^{\lambda KC_f}+\limsupn \proba\pp{\calE_{n,\epsilon,K,\lambda}^c}.
\end{align*}
Now, the desired convergence $\limn \esp e^{-\xi_n(f)} = \esp e^{-\xi(f)}$  follows by first taking $\lambda\downarrow 0$ and then $K\to\infty$, combined with the following facts:
\[
\limsupn\proba\pp{\calE_{n,\epsilon,K,\lambda}^c} \le  \limsupn \proba\pp{\calE_{n,3,\epsilon,K}^c}, \mfa K,\lambda>0,
\]
and
\[
\lim_{K\to\infty}\limsupn \proba\pp{\calE_{n,3,\epsilon,K}^c} = 0.
\]
To see the above we examine  each of the four events separately.
\begin{enumerate}[(i)]
\item $\limn\proba(\calE_{n,1,\epsilon}^c)=0$. Asymptotically, there are $N_\epsilon$ (a Poisson random variable with mean $\epsilon^{-\alpha}$) number  of $\what C_{n,\epsilon}(i)$ that are non-empty. Therefore, it suffices to show that
\equh\label{eq:two_intersection}
\limn \proba\pp{\what C_{n,\epsilon}(1)\cap \what C_{n,\epsilon}(2) \ne \emptyset\mmid \what C_{n,\epsilon}(i)\ne\emptyset, i=1,2}= 0.
\eque
Again, we can restrict to the event $|\what C_{n,\epsilon}(i)| \le K_0, i=1,2$ for $K_0\in\N$, and it is clear that
\[
\limn\proba\pp{\what C_{n,\epsilon}(1)\cap \what C_{n,\epsilon}(2)\ne\emptyset,  |\what C_{n,\epsilon}(i)| \le K_0, i=1,2} = 0,
\]
and by (\ref{eq:what C_n})
\[
\lim_{
K_0
\to\infty}\limsupn\proba\pp{|\what C_{n,\epsilon}(i)|>K_0, \mbox{ for $i=1$ or $2$}} = 0.
\]
The desired \eqref{eq:two_intersection} then follows.

\item $\limn\proba(\calE_{n,2,\epsilon}^c)=0$. This follows from  \eqref{eq:Z} and \eqref{eq:W}, and the identity that  $\wt X_{n,j} - \wt X_{n,j,\epsilon} = Z_{n,j,\epsilon,\epsilon_n}+W_{n,j,\epsilon_n}$.

\item $\lim_{K\to\infty}\limn\proba(\calE_{n,3,\epsilon,K}^c) = 0$. We already proved this in \eqref{eq:what C_n}.

\item $\limn\proba(\calE_{n,4,\epsilon,\lambda}^c) = 0$. To see this,  use the relation
\[
\proba\pp{\calE_{n,4,\epsilon,\lambda}^c}\le \proba\pp{ \max_{j\in\what C_{n,\epsilon}}|Z_{n,j,\epsilon,\epsilon_n}| > 0}+\proba \pp{\max_{j=1,\dots,n}|W_{n,j,\epsilon_n}|>\lambda}.
\]
Then recall \eqref{eq:Z1} and \eqref{eq:W}.
\end{enumerate}
We have completed the proof.
\end{proof}
\subsection{A second proof of Theorem \ref{thm:1}}\label{sec:second proof}
In the case of i.i.d.~random variables with regularly-varying tails of tail index $\alpha\in(0,2)$, it is a classical result that once the point-process convergence
is
 established, the functional central limit theorem holds \citep[proof of Proposition 3.4]{resnick86point}. Here we can also obtain another proof of Theorem \ref{thm:1} following Proposition \ref{prop:1}. However, as mentioned in Remark \ref{rem:tightness}, the tightness is hard for Karlin stable processes. We only manage to prove the convergence of finite-dimensional distributions.

The proof consists of an approximation argument.
Let $T_{2,\epsilon}$ be as in \citep[proof of Proposition 3.4]{resnick86point}. This is a mapping from
$\mathfrak M_p(\wb\R\setminus\{0\}\times[0,1])$ to $D([0,1])$, with, for any $\zeta  = \sum_i\ddelta{y_i,u_i}\in\mathfrak M_p(\wb \R\setminus\{0\}\times[0,1])$,
\[
[T_{2,\epsilon}\zeta](t):= \sum_iy_i\inddd{u_i\le t, |y_i|>\epsilon}, t\in[0,1].
\]
Thus, by continuous mapping theorem applied to Proposition \ref{prop:PPP},
$T_{2,\epsilon}\what\xi_{n} \weakto T_{2,\epsilon}\what\xi$ ($T_{2,\epsilon}$ is almost surely continuous with respect to law induced by $\what\xi$),  which is the same as (compare with \eqref{eq:zeta series})
\[
\ccbb{\frac1{a_n}\summ i1{m_n}\frac{\calX_i}{q_i^{1/\alpha'}}\inddd{\tau_{i,\floor{nt}}\topp{q_i}\ \rm odd}\inddd{|\calX_i|>a_nq_i^{1/\alpha'}\epsilon}}_{t\in[0,1]}
\weakto \left\{\sif\ell1 \frac{\varepsilon_\ell}{\Gamma_\ell^{1/\alpha}}\inddd{\summ j1{Q_{\beta,\ell}}\inddd{U_{\ell,j}\le t}\ \rm odd}\inddd{\Gamma_\ell^{-1/\alpha}>\epsilon}\right\}_{t\in[0,1]}
\]
in $D([0,1])$.
The above implies the convergence of finite-dimensional distribution of the truncated process, and it remains to show that for every $t\in[0,1]$,
\[
\lim_{\epsilon\downarrow0}\limsupn\proba\pp{\abs{\frac1{a_n}\summ i1{m_n}\frac{\calX_i}{q_i^{1/\alpha'}}\inddd{|\calX_i|\le a_nq_i^{1/\alpha'}\epsilon}\inddd{\tau_{i,\floor{nt}}\topp{q_i}\ \rm odd}}>\lambda} = 0, \mfa \lambda>0.
\]
(See \citep[Theorem 2]{dehling09new}.)
It suffices to prove for a fixed $t$, and without loss of generality we take $t=1$. In this case the above follows from Chebychev inequality and, for all $\epsilon>0$,
\equh\label{eq:v_n}
\limsupn v_{n,\epsilon}\le C\epsilon ^{2-\alpha}\qmwith
v_{n,\epsilon}:=m_n\esp\pp{\pp{\frac{\calX}{a_nq^{1/\alpha'}}}^2\inddd{|\calX|\le a_nq^{1/\alpha'}\epsilon}\inddd{\tau_{n}\topp{q}\ \rm odd}}.
\eque
We first compute $v_{n,\epsilon}$, with the expectation restricted to $q\in[1/n,1]$. An upper bound is then (bounding the second indicator function by 1), for $n$ large enough,
\[
\frac{m_n}{a_n^2}\int_{1/n}^1 q^{-\rho-2/\alpha'} L(1/q)\esp_q \pp{\calX^2\inddd{|\calX|\le a_n q^{1/\alpha'}\epsilon}}dq \le \frac{Cm_n\epsilon^{2-\alpha}}{a_n^\alpha}\int_{1/n}^1 q^{-\rho-\gamma}L(1/q) dq \le  C\epsilon^{2-\alpha}.
\]
(More precisely, $\epsilon>0$ is fixed, $C$ can be taken independent of $\epsilon$, while the above holds only for all $n>n_{C,\epsilon}$ for some $n_{C,\epsilon}$.) For $v_{n,\epsilon}$ with the expectation restricted to $q\in[0,1/n]$, note that then
$\proba_q\pp{\tau\topp q_n\ \rm odd}  = (1-(1-2q)^n)/2$ and
\[
\sup_{q\in[0,1/n]}\frac{(1-(1-2q)^n)}{qn}= 2.
\]
Therefore,
\begin{align*}
\frac{m_n}{a_n^2}&\int_0^{1/n}\esp_q\pp{\calX^2\inddd{|\calX|\le a_nq^{1/\alpha'}\epsilon}}\proba_q\pp{\tau_{\floor{nt}}\topp{q}\ {\rm odd}}q^{-\rho-2/\alpha'}L(1/q)dq\\
&\le \frac{m_n}{a_n^2}\int_0^{1/n}\esp_q\pp{\calX^2\inddd{|\calX|\le a_nq^{1/\alpha'}\epsilon}}n q^{1-\rho-2/\alpha'}L(1/q)dq\\
&\le \frac{C m_n n}{a_n^2}\int_0^{1/n} \left(a_nq^{1/\alpha'}\epsilon\right)^{2-\alpha}q^{1-\rho-2/\alpha'}L(1/q)dq
= \frac{Cm_n n}{a_n^\alpha}\epsilon^{2-\alpha}\int_n^\infty q^{\beta-2}L(q)dq\\
&\le \frac{Cm_n n}{a_n^\alpha}n^{\beta-1}L(n)\epsilon^{2-\alpha}=C\epsilon^{2-\alpha}.
\end{align*}
We have thus proved \eqref{eq:v_n}.
\begin{Rem}
If we want to enhance the result to a functional central limit theorem in $D([0,1])$, a sufficient condition would be
\[
\lim_{\epsilon\downarrow0}\limsupn\proba\pp{\sup_{t\in[0,1]}\abs{\frac1{a_n}\summ i1{m_n}\frac{\calX_i}{q_i^{1/\alpha'}}\inddd{|\calX_i|\le a_nq_i^{1/\alpha'}\epsilon}\inddd{\tau_{i,\floor{nt}}\topp{q_i}\ \rm odd}}>\lambda} = 0, \mfa \lambda>0.
\]
Whether the above is true remains an open question.
\end{Rem}

\subsection{A limit theorem for Karlin random sup-measures}\label{sec:RSM}
Now we explain how Theorem \ref{thm:PPP} entails the convergence of random sup-measures. Random sup-measures provide a natural framework to characterize scaling limits of extremes, although they are not commonly used yet in the literature. For background of random sup-measures, see \citep{obrien90stationary,vervaat97random,molchanov17theory}.
 For the sake of simplicity, we shall  treat random sup-measures as $\alpha$-Fr\'echet max-stable set-indexed process $\{\calM_{\alpha,\beta}(I)\}_{I\in\calI}$, with $\calI$ the collection of all open sets of $[0,1]$, denoted by
\[
\mab(I):=\sup_{\ell\in\N}\frac1{\Gamma_\ell^{1/\alpha}}\inddd{\pp{\bigcup_{j=1}^{Q_{\beta,\ell}}\{U_{\ell,j}\}}\cap I\ne\emptyset}, I\subset \calI,
\]
and prove the convergence of finite-dimensional distributions of the set-indexed processes (for max-stable processes, see \citep{dehaan84spectral,kabluchko09spectral,stoev10max}).
For $\calM_{\alpha,\beta}$, it has the following multivariate $\alpha$-Fr\'echet finite-dimensional distributions (although we do not need to work with the explicit formula):
\[
\proba\pp{\calM_{\alpha,\beta}(I_1)\le x_1,\dots,\calM_{\alpha,\beta}(I_d)\le x_d}
= \exp\pp{-\esp\pp{\max_{k=1,\dots,d}\frac{\inddd{\calC_{Q_\beta}\cap I_k\ne\emptyset}}{
x_k^\alpha
}}},
\]
for all $I_1,\dots,I_d\in\calI, x_1,\dots,x_d>0$.

The following result on the convergence of max-stable processes can be strengthened immediately to convergence of random sup-measures (which is defined for {\em all} subsets of $[0,1]$). We just mention that Karlin random sup-measures are translation-invariant and $\beta/\alpha$-self-similar, and they are a special case of the recently introduced Choquet random sup-measures \citep{molchanov16max}. We refer to \citep{durieu18family} for more results on the Karlin random sup-measures.

Introduce
\[
M_n(I):=\max_{j/n\in I}\frac1{a_n}\abs{\summ i1{m_n}\frac{\calX_i}{q^{1/\alpha'}_i}\eta_{i,j}\topp{q_j}}, \quad I\subset \calI, n\in\N,
\]

\begin{Coro}
Under the assumption of Theorem \ref{thm:PPP},
\[
\ccbb{M_n(I)}_{I\in\calI}\fddto \ccbb{\calM_{\alpha,\beta}(I)}_{I\in\calI}.
\]
\end{Coro}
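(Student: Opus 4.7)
The plan is to deduce the corollary from the point-process convergence $\xi_n\weakto\xi$ in Theorem~\ref{thm:PPP} via the standard continuous-mapping-plus-truncation argument (cf.\ \citep[proof of Proposition~3.4]{resnick86point}). Writing
\[
\wt X_{n,j}:=\frac1{a_n}\summ i1{m_n}\frac{\calX_i\eta_{i,j}\topp{q_i}}{q_i^{1/\alpha'}}, \qquad \xi_n:=\summ j1n\ddelta{\wt X_{n,j},j/n},
\]
one immediately sees $M_n(I)=\sup\ccbb{|y|:(y,u)\in\xi_n,\ u\in I}$, and using $|\varepsilon_\ell\Gamma_\ell^{-1/\alpha}|=\Gamma_\ell^{-1/\alpha}$ together with the equivalence $\calC_{Q_{\beta,\ell}}\cap I\ne\emptyset\iff \exists j\le Q_{\beta,\ell}:U_{\ell,j}\in I$, also $\mab(I)=\sup\ccbb{|y|:(y,u)\in\xi,\ u\in I}$. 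The task is therefore to deduce convergence of these suprema from the vague convergence of the underlying point processes.

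For each $\epsilon>0$ and each open $I\subset[0,1]$ with $\Leb(\partial I)=0$, introduce the truncated-sup functional
\[
T_\epsilon(I;\zeta):=\sup\ccbb{|y|:(y,u)\in\zeta,\ u\in I,\ |y|>\epsilon},\qquad\sup\emptyset:=0,
\]
on $\mathfrak M_p((\wb\R\setminus\{0\})\times[0,1])$. Because the supremum involves only the finitely many atoms of $\zeta$ lying in the relatively compact set $\ccbb{|y|>\epsilon}\times\wb I$, the map $\zeta\mapsto(T_\epsilon(I_k;\zeta))_{k=1}^d$ is vaguely continuous at any $\zeta$ placing no mass on $\ccbb{|y|=\epsilon}\times\wb I_k$ or on $(\wb\R\setminus\{0\})\times\partial I_k$, $k=1,\dots,d$. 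For the limit $\xi$ these exclusion events have probability zero, for any fixed $\epsilon>0$ and any finite collection $I_1,\dots,I_d$ of null-boundary open sets, since the $\{U_{\ell,j}\}$ are absolutely continuous and the $\{\Gamma_\ell^{-1/\alpha}\}$ are a.s.\ distinct with continuous law. Theorem~\ref{thm:PPP} and the continuous mapping theorem therefore give
\[
\pp{T_\epsilon(I_k;\xi_n)}_{k=1}^d\weakto\pp{T_\epsilon(I_k;\xi)}_{k=1}^d.
\]

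To remove the truncation, observe the elementary deterministic sandwich
\[
T_\epsilon(I;\xi_n)\le M_n(I)\le T_\epsilon(I;\xi_n)\vee\epsilon,\qquad T_\epsilon(I;\xi)\le\mab(I)\le T_\epsilon(I;\xi)\vee\epsilon,
\]
where the upper bound follows from the dichotomy: either some qualifying atom already exceeds $\epsilon$ in absolute value (so the truncated and full suprema agree), or every such atom is bounded by $\epsilon$. Moreover $T_\epsilon(I;\xi)\uparrow\mab(I)$ a.s.\ as $\epsilon\downarrow 0$, and $\mab(I)>0$ a.s.\ whenever $\Leb(I)>0$ (being $\alpha$-Fr\'echet with positive scale). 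A standard diagonal argument, letting $n\to\infty$ first and then $\epsilon\downarrow 0$, then yields
\[
\pp{M_n(I_k)}_{k=1}^d\weakto\pp{\mab(I_k)}_{k=1}^d,
\]
which is the claimed finite-dimensional convergence. The only genuine technical point is verifying the $\xi$-a.s.\ continuity of $T_\epsilon$; beyond that mild step the corollary is a direct consequence of Theorem~\ref{thm:PPP}.
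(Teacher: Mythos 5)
Your argument is correct, but it takes a genuinely different (and longer) route than the paper's. The paper simply observes that the event $\{M_n(I_1)\le x_1,\dots,M_n(I_d)\le x_d\}$ is \emph{exactly} the void event $\{\xi_n^*(B)=0\}$ for $B=\bigcup_{k=1}^d\big((x_k,\infty]\times I_k\big)$, where $\xi_n^*$ is the image of $\xi_n$ under $(y,u)\mapsto(|y|,u)$; weak convergence of point processes then gives $\proba(\xi_n^*(B)=0)\to\proba(\xi^*(B)=0)$, and the latter is precisely the stated finite-dimensional distribution of $\calM_{\alpha,\beta}$. That is the whole proof --- no truncation, no sup functional, no continuity analysis beyond requiring that $\xi^*$ put no mass on $\partial B$ a.s. Your route --- continuous mapping applied to the truncated sup functional $T_\epsilon$, the sandwich $T_\epsilon\le M_n\le T_\epsilon\vee\epsilon$, and removal of the truncation --- is the standard Resnick-style machinery and is sound; it buys you joint weak convergence of the random vectors $(M_n(I_k))_{k=1}^d$ directly, rather than only convergence of the joint distribution function at fixed points, though for Fr\'echet limits the two are equivalent. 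Two small remarks: your truncation removal can be shortcut, since for $\epsilon<\min_k x_k$ the events $\{M_n(I_k)\le x_k\ \forall k\}$ and $\{T_\epsilon(I_k;\xi_n)\le x_k\ \forall k\}$ coincide exactly, so no diagonal argument is needed; and both your proof and the paper's implicitly require $\Leb(\partial I_k)=0$ (so that the limit point process a.s.\ charges no mass on the relevant boundaries), a restriction you state explicitly while the paper leaves it implicit.
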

\begin{proof}
By definition, it suffices to 
show
\equh\label{eq:RSM fdd convergence}
\limn\proba\pp{M_n(I_1)\le x_1,\dots,M_n(I_d)\le x_d} = \proba\pp{\calM_{\alpha,\beta}(I_1)\le x_1,\dots,\calM_{\alpha,\beta}(I_d)\le x_d},
\eque
for all $d\in\N,x_i>0, I_i\in\calI, i=1,\dots,d$. Now, Theorem \ref{thm:PPP} implies that, ignoring the signs of the values and working with point processes in $\mathfrak M_p((0,\infty]\times[0,1])$,
\[\xi_n^*:=\summ j1n\ddelta{\abs{\summ i1{m_n}\calX_i\eta\topp{q_i}_{i,j}/(a_nq_i^{1/\alpha'})},j/n} \weakto \xi^*:=\sif\ell1\summ j1{Q_{\beta,j}}\ddelta{\Gamma_\ell^{-1/\alpha},U_\ell}.
\]
The above then implies in particular, with $B := \bigcup_{k=1}^d\pp{(x_k,\infty]\times I_k}$,
\[
\limn\proba\pp{\xi_n^*(B)= 0} = \proba(\xi^*(B) = 0).
\]
The above is exactly the desired convergence in \eqref{eq:RSM fdd convergence}. This completes the proof.
\end{proof}
\subsection*{Acknowledgements} YW's research was partially supported by Army Research Office, US (W911NF-20-1-0139)
, YS's research was supported by NSERC, Canada (2020-04356).
 \bibliographystyle{apalike}
\bibliography{references,references18}
\end{document}